\newtheorem{theorem}{Theorem}
\theoremstyle{plain}
\newtheorem{corollary}{Corollary}
\newtheorem{definition}{Definition}
\newtheorem{example}{Example}
\newtheorem{proposition}{Proposition}
\newtheorem{remark}{Remark}
\numberwithin{equation}{section}
\begin{document}
\title[ $\epsilon$-isothermic surfaces in pseudo-Euclidean 3-space]{$\epsilon$-isothermic surfaces in pseudo-Euclidean 3-space}

\author{Armando M. V. Corro}
\curraddr[Armando M. V. Corro]{Instituto de Matem\'atica e Estat\'istica, Universidade Federal de Goi\^as, 74001-970, Goi\^ania-GO, Brazil.}%
\email[]{avcorro@gmail.com}%
\author{Carlos M. C. Riveros}
\curraddr[Carlos M. C. Riveros]{Departamento de Matem\'atica, Universidade de Bras\'ilia, CEP 70910-900, Bras\'ilia, DF, Brasil.}%
\email[]{carlos@mat.unb.br}%
\author{Marcelo L. Ferro}
\curraddr[Marcelo L. Ferro]{Instituto de Matem\'atica e Estat\'istica, Universidade Federal de Goi\^as, 74001-970, Goi\^ania-GO, Brazil.}%
\email[]{marceloferro@ufg.br}%
\subjclass[2010]{53A35} %
\keywords{Dupin surfaces, isothermic surfaces, lines of curvature.}%

\begin{abstract}
In this paper we describe the $\epsilon$-isothermic surfaces in the pseudo-Euclidean 3-space and we obtain the pseudo-Calapso equation. In sequence, we classify the Dupin surfaces in pseudo-Euclidean 3-space having distinct principal curvatures and provide explicit coordinates for such surfaces. As application of the theory, we give explicit solutions to the pseudo-Calapso equation.
\end{abstract}
\maketitle
\section{Introduction}
Dupin surfaces were first studied by Dupin in 1822. A Hypersurface is said to be Dupin  if each principal curvature is constant along
its corresponding surface of curvature. A Dupin submanifold $M$ is said to be proper if the number $g$ of distinct principal
curvatures is constant on $M$. The simplest Dupin submanifolds are the isoparametric hypersurfaces, that is, those whose principal curvatures are constant.

Dupin's surfaces in Euclidean space are classified. There are several equivalent definitions of Dupin cyclides, for example, in Euclidean space, they can be defined as any inversion of a torus, cylinder or double cone, i.e, Dupin cyclide is invariant under M\"obius transformations. Classically the cyclides of Dupin were characterized by the property that both sheets of the focal set are curves. Another equivalent definition says that such surfaces can also be given as surfaces that are the envelope of two families at 1-parameter spheres (including planes as degenerate spheres). For more on Dupin cyclides see \cite{1}-\cite{2}.

The research of isothermic surfaces is one of the most common
more difficult problems of differential geometry and depends on the integration of an equation with fourth-order partial derivatives (see \cite{We}).
Particular classes of these surfaces are known and some transformations by means of which it is possible to deduce from isothermic surfaces other isothermic surfaces. All this is known indirectly and independently of the fourth-order differential equation, because it is difficult to integrate.

The theory of isothermic surfaces has a great development for eminent geometers as Christoffel \cite{Cr}, Darboux \cite{D1}, \cite{D2} and Bianchi \cite{Bi} among others.
In the last decades, the theory woke up interest by his connection with the modern theory of integrated systems, see \cite{Ci1}, \cite{Ci2}, \cite{Ro}, \cite{So} and \cite{Sy}.
Particular classes of isothermic surfaces are the constant mean curvature surfaces, quadrics, surfaces whose lines of curvature has constant geodesic curvature, in particular, the cyclides of Dupin. Trasformations of $\mathbb{R}^{3}$ that preserve isothermic surfaces are isometries, dilations and inversions.

In \cite{Bo}, the authors study surfaces with harmonic inverse mean curvature (HIMC surfaces), they distinguish a subclass of $\theta$-isothermic surfaces,
which is a generalization of the isothermic HIMC surfaces, and classify all the $\theta$-isothermic HIMC
surfaces, note that when $\theta=0$, the surfaces are isothermic.

In \cite{Ci2}, the author show that theory of soliton surfaces, modified in an appropriate way, can be applied also to isothermic immersions in $\mathbb{R}^3$. In this case the so called Sym's formula gives an explicit expression for the isothermic immersion with prescribed fundamental forms.
The complete classification of the isothermic surfaces is an open problem.

In \cite{3} the author establishes an equation with fourth order partial derivatives from which the problem of obtaining isothermic surfaces apparently becomes much simpler. Such equation ( called Calapso equation ) defined in \cite{3}  given by
\begin{eqnarray}\nonumber
\Delta\bigg(\frac{\phi,_{12}}{\phi}\bigg)+\big(\phi^2\big),_{12}=0,\nonumber
\end{eqnarray}
 describes isothermic surfaces in $\mathbb{R}^3$, where $\phi,_{12}$ denotes the derivative of $\phi$ with respect to $u_1$ and $u_2$.

In \cite{13} the authors introduced the class of radial inverse mean curvature surface (RIMC-surfaces ), that are isothermic surfaces. Moreover, were obtained two solutions of the Calapso equation where one can be obtained using \cite{1} and a different one.

In this paper, motivated by \cite{3} and \cite{13} we describe the $\epsilon$-isothermic surfaces in the pseudo-Euclidean 3-space and we obtain the pseudo-Calapso equation
\begin{eqnarray}\nonumber
\Delta_\epsilon\bigg(\frac{\phi,_{12}}{\phi}\bigg)+\epsilon_2\big(\phi^2\big),_{12}=0,\nonumber
\end{eqnarray}
 where $\epsilon_1^2=\epsilon_2^2=1$, $\epsilon=\epsilon_1\epsilon_2$ and $\phi,_{12}$ denotes the derivative of $\phi$ with respect to $u_1$ and $u_2$.
For each $\epsilon$-isothermic surface of the pseudo-Euclidean 3-space, we show that we can associate to these surfaces two solutions for the pseudo-Calapso equation. In sequence, we consider those proper Dupin surface of the pseudo-Euclidean 3-space having distinct principal curvatures, parametrized by lines of curvature. We prove that every Dupin surface parametrized by lines of curvature has a $\epsilon$-isothermic surface and provide explicit coordinates for such surfaces. As application of the theory, we give explicit solutions of the pseudo-Calapso equation.

\section{$\epsilon$-isothermic surfaces and the pseudo-Calapso equation}

In this section, we briefly review the main definitions, we describe the $\epsilon$-isothermic surfaces in the pseudo-Euclidean 3-space and we obtain
the pseudo-Calapso equation.

We consider $E^3$ as the pseudo-Euclidean 3-space, i.e, $\mathbb{R}^{3}$ equipped with the metric $\langle ,\rangle$, given by
$$\langle(x_1,y_1,z_1),(x_2,y_2,z_2)\rangle=\epsilon_1x_1x_2+\epsilon_2y_1y_2+\epsilon_3z_1z_2$$
where $\epsilon_i^2=1$, $1\leq i\leq 3$.

In particular, making $\epsilon_1=1$, we consider $\mathbb{C}_{\epsilon_2}$ as the pseudo-complex space with the metric $z\overline{z}=u_1^2+\epsilon_2 u_2^2$, where $z=u_1+iu_2$, $i^2=-\epsilon_2$.
\begin{definition}{\em
We define the function $f:\mathbb{C}_{\epsilon_2}\rightarrow \mathbb{C}_{\epsilon_2}$, $f(z)=f(u_1+iu_2)=u(u_1,u_2)+iv(u_1,u_2)$ as {\em $\epsilon_2-$holomorphic} if and only if $u_{,1}=v_{,2}$ and $u_{,2}=-\epsilon_2v_{,2}$ (see \cite{17}).}
\end{definition}
Here the subscript $,i$ denotes the derivative with respect to $u_i$.
\begin{definition}{\em
 A surface $M$ in pseudo-Euclidean 3-space is called \textit{$\epsilon$-isothermic surface} if it admits parametrization by lines of
 curvature and the first fundamental form is conformal to the metric $\epsilon_1 du_1^2+ \epsilon_2 du_2^2$.}
\end{definition}
\begin{remark}{\em
 If $X:U\subseteq \mathbb{R}^2\rightarrow E^{3}$ is a parametrization of a $\epsilon$-isothermic surface $M$, then the first and the second
 fundamental forms are given by
\begin{eqnarray}\label{eq1forma}
I=e^{2\varphi}\big(\epsilon_1du_1^2+\epsilon_2du_2^2\big)\hspace{1cm}II=edu_1^2+gdu_2^2,
\end{eqnarray}
where $\epsilon_i^2=1$, $1\leq i\leq 2$.\\
The Codazzi and Gauss equation are given by
\begin{eqnarray}
e,_2&=&\big(e+\epsilon g\big)\varphi,_2\nonumber\\
g,_1&=&\epsilon\big(e+\epsilon
g\big)\varphi,_1\label{gausscodazi}\\
\Delta_\epsilon\varphi &=&-\epsilon_2ege^{-2\varphi}\label{gaussequation},
\end{eqnarray}
where
$\Delta_\epsilon\varphi=\varphi,_{11}+\epsilon\varphi,_{22}$, with
$\epsilon=\epsilon_1\epsilon_2$.\\
To integrate the system (\ref{gausscodazi}) we make the following
substitution
\begin{eqnarray}\label{eqint}
e=\frac{1}{\sqrt{2}}\big(\omega+\Omega\big)e^\varphi\hspace{1cm}g=\frac{\epsilon}{\sqrt{2}}\big(\omega-\Omega\big)e^\varphi.
\end{eqnarray}
Thus, by (\ref{eqint}) the system (\ref{gausscodazi}) can be
written as
\begin{eqnarray}
\Omega,_1&=&\omega,_1-\big(\omega+\Omega\big)\varphi,_1\label{eqint1}\\
\Omega,_2&=&-\omega,_2+\big(\omega-\Omega\big)\varphi,_2\label{eqint2}\\
\Delta_\epsilon\varphi
&=&\frac{\epsilon_2}{2}\big(\Omega^2-\omega^2\big),\label{eqint3}
\end{eqnarray}
where $\epsilon=\epsilon_1\epsilon_2$.}
\end{remark}
\begin{definition}{\em
 For each function $\omega=\omega(u_1, u_2)$, we define the {\em pseudo-Calapso equation} as
\begin{eqnarray}\label{calapsoequation}
\Delta_\epsilon\bigg(\frac{\omega,_{12}}{\omega}\bigg)+\epsilon_2\big(\omega^2\big),_{12}=0,
\end{eqnarray}
where $\epsilon=\epsilon_1\epsilon_2$ and $\epsilon_1^2=\epsilon_2^2=1$.}
\end{definition}
\begin{definition}{\em
Let $M$ be a surface with principal curvatures $-\lambda_1$ and $-\lambda_2$. The \textit{skeaw curvature} of $M$ (see \cite{13})
is given by
\begin{eqnarray}\label{skeaw}
H'=\frac{\lambda_2-\lambda_1}{2}.
\end{eqnarray}}
\end{definition}

\begin{theorem}{Let $X(u_1, u_2)$ be a $\epsilon$-isothermic surface with first
fundamental form given by
\begin{eqnarray*}\nonumber
I=e^{2\varphi}\big(\epsilon_1du_1^2+\epsilon_2du_2^2\big).\nonumber
\end{eqnarray*}
Then the functions
\begin{eqnarray}\label{calpsosolution}
\omega=\epsilon_1\sqrt{2}{e\varphi}H\hspace{0,5cm}and\hspace{0,5cm}\Omega=\epsilon_1\sqrt{2}{e\varphi}H',
\end{eqnarray}
are solutions of the pseudo-Calapso equation (\ref{calapsoequation}), where $\epsilon=\epsilon_1\epsilon_2$, $H$ is the mean curvature
and $H'$ the skew curvature of $X$.}
\end{theorem}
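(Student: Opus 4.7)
The strategy is to reduce the pseudo-Calapso equation to an identity that follows from the Codazzi--Gauss system (\ref{eqint1})--(\ref{eqint3}), after first checking that the $\omega$ and $\Omega$ of (\ref{calpsosolution}) coincide with those introduced by the substitution (\ref{eqint}). The mean and skew curvatures of an $\epsilon$-isothermic surface with fundamental forms (\ref{eq1forma}) are computed from the Weingarten formulas and (\ref{skeaw}) as $H=\tfrac{1}{2}e^{-2\varphi}(\epsilon_1 e+\epsilon_2 g)$ and $H'=\tfrac{1}{2}e^{-2\varphi}(\epsilon_1 e-\epsilon_2 g)$. Multiplying by $\epsilon_1\sqrt{2}\,e^{\varphi}$ and using $\epsilon_1\epsilon_2=\epsilon$ yields exactly the expressions obtained by solving (\ref{eqint}) for $\omega,\Omega$, so the pair in the statement satisfies (\ref{eqint1})--(\ref{eqint3}) and those equations may be used throughout.

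The central step is the pair of identities
\[
\omega,_{12}=\omega(\varphi,_{12}+\varphi,_{1}\varphi,_{2}),\qquad \Omega,_{12}=\Omega(\varphi,_{1}\varphi,_{2}-\varphi,_{12}).
\]
Differentiating (\ref{eqint1}) with respect to $u_2$ and (\ref{eqint2}) with respect to $u_1$, equating the two expressions for $\Omega,_{12}$, and then using (\ref{eqint1})--(\ref{eqint2}) themselves to simplify $\omega,_{2}+\Omega,_{2}=(\omega-\Omega)\varphi,_{2}$ and $\omega,_{1}-\Omega,_{1}=(\omega+\Omega)\varphi,_{1}$ produces the first identity; the second follows by substituting it back into the $u_2$-derivative of (\ref{eqint1}). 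In particular $\omega,_{12}/\omega$ and $\Omega,_{12}/\Omega$ depend only on $\varphi$ and its derivatives.

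The pseudo-Calapso equation for $\omega$ then reads $\Delta_\epsilon(\varphi,_{12}+\varphi,_{1}\varphi,_{2})+\epsilon_2(\omega^2),_{12}=0$, and for $\Omega$ the same thing with the sign of $\varphi,_{12}$ reversed. From (\ref{eqint3}) one has $\Delta_\epsilon(\varphi,_{12})=\tfrac{\epsilon_2}{2}(\Omega^2-\omega^2),_{12}$, while the Leibniz rule for the Laplacian gives
\[
\Delta_\epsilon(\varphi,_{1}\varphi,_{2})=\varphi,_{2}(\Delta_\epsilon\varphi),_{1}+\varphi,_{1}(\Delta_\epsilon\varphi),_{2}+2\varphi,_{12}\Delta_\epsilon\varphi,
\]
into which (\ref{eqint3}) is substituted again. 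Adding and subtracting the two Calapso equations decouples them: the antisymmetric combination is exactly the doubly-differentiated Gauss equation and holds automatically, while the symmetric combination reduces to the single identity
\[
\Delta_\epsilon(\varphi,_{1}\varphi,_{2})+\tfrac{\epsilon_2}{2}\big[(\omega^2),_{12}+(\Omega^2),_{12}\big]=0.
\]

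To verify this last identity, expand $(\omega^2),_{12}=2\omega,_{1}\omega,_{2}+2\omega\,\omega,_{12}$ and its $\Omega$-analogue via step two, and then eliminate $\Omega,_{1},\Omega,_{2}$ through (\ref{eqint1})--(\ref{eqint2}); the $\omega,_{1}\omega,_{2}$ cross terms cancel and the remainder matches the Leibniz expansion above. The main obstacle is organizational rather than conceptual: one has to keep track of many terms cubic in $\omega,\Omega$ and derivatives of $\varphi$ that arise when $\partial_1\partial_2$ is applied to two squared quantities. The decomposition into symmetric and antisymmetric halves is what makes the cancellations transparent and neatly reflects the $H$--$H'$ duality built into the theorem.
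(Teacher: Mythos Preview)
Your argument is correct and uses the same core ingredients as the paper: the identities $\omega,_{12}/\omega=\varphi,_{12}+\varphi,_1\varphi,_2$ and $\Omega,_{12}/\Omega=-\varphi,_{12}+\varphi,_1\varphi,_2$, the Leibniz-type expansion of $\Delta_\epsilon(\varphi,_1\varphi,_2)$, and repeated use of the Gauss equation (\ref{eqint3}). The only organizational difference is that the paper first shows directly that $A:=\Delta_\epsilon(\omega,_{12}/\omega)+\epsilon_2(\omega^2),_{12}$ vanishes and then observes that the difference of the two Calapso expressions equals $2(\Delta_\epsilon\varphi),_{12}=\epsilon_2(\Omega^2-\omega^2),_{12}$, whence $\Omega$ is a solution as well; you instead take the symmetric and antisymmetric combinations at the outset, so that the antisymmetric half is immediately the differentiated Gauss equation and only the symmetric half needs a computation. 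Your ordering treats $\omega$ and $\Omega$ on equal footing and makes the $H$--$H'$ duality manifest, while the paper's route avoids having to verify the symmetric identity separately since it is subsumed in the direct computation of $A$. You also make explicit the check that the $\omega,\Omega$ of (\ref{calpsosolution}) agree with those of the substitution (\ref{eqint}), a point the paper takes for granted.
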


\begin{proof} Differentiating the equation (\ref{eqint1})
with respect to $u_2$ and the equation (\ref{eqint2}) with respect to
$u_1$, adding and subtracting these expression, we obtain
\begin{eqnarray}\label{eq1teo1.1}
\frac{\omega,_{12}}{\omega}=\varphi,_{12}+\varphi,_1\varphi,_2,\hspace{1cm}\frac{\Omega,_{12}}{\Omega}=-\varphi,_{12}+\varphi,_1\varphi,_2.
\end{eqnarray}
Let
\begin{eqnarray}\label{eq2teo1.2}
A=\Delta_\epsilon\bigg(\frac{\omega,_{12}}{\omega}\bigg)+\epsilon_2\big(\omega^2\big),_{12},
\end{eqnarray}
using the first equation of (\ref{eq1teo1.1}) and properties
of the Laplacian operator, we get
\begin{eqnarray*}\nonumber
A=\epsilon_2\big(\omega^2\big),_{12}+\big(\Delta_\epsilon\varphi\big),_{12}+\varphi,_2\big(\Delta_\epsilon\varphi\big),_{1}+
\varphi,_1\big(\Delta_\epsilon\varphi\big),_{2}+2\varphi,_{12}\big(\Delta_\epsilon\varphi\big),
\end{eqnarray*}
From (\ref{eqint3}), we have
\begin{eqnarray*}\nonumber
A=\frac{\epsilon_2}{2}\bigg[\big(\omega^2+\Omega^2\big),_{12}+\big(\Omega^2-\omega^2\big),_{1}\varphi,_2+\big(\Omega^2-\omega^2\big),_{2}\varphi,_1+
\big(\Omega^2-\omega^2\big)\varphi,_{12}\bigg].
\end{eqnarray*}
Substituting $(\omega^2+\Omega^2),_{12}$ and using
(\ref{eq1teo1.1}), we obtain
\begin{eqnarray*}\nonumber
A=\frac{\epsilon_2}{2}\bigg[2\big(\omega,_1\omega,_2+\Omega,_1\Omega,_2\big)+2\big(\Omega^2+\omega^2\big)\varphi,_2\varphi,_1+
\big(\Omega^2-\omega^2\big),_{1}\varphi,_2+\big(\Omega^2-\omega^2\big),_{2}\varphi,_1\bigg].
\end{eqnarray*}
Substituting $(\Omega^2-\omega^2),_{1}$,
$(\Omega^2-\omega^2),_{2}$ and using (\ref{eqint1}),
(\ref{eqint2}) and (\ref{eqint3}), we obtain $A=0$. Using
(\ref{eq2teo1.2}) and (\ref{eqint}), we get that
$\omega=\epsilon_1\sqrt{2}e^\varphi H$ is a solution to the Pseudo-Calapso equation.\\
On the other hand, using (\ref{eq1teo1.1}) we get
\begin{eqnarray*}\nonumber
\Delta_\epsilon\bigg(\frac{\omega,_{12}}{\omega}\bigg)-\Delta_\epsilon\bigg(\frac{\Omega,_{12}}{\Omega}\bigg)=2\Delta_\epsilon\big(\varphi,_{12}\big)=
2\big(\Delta_\epsilon\varphi\big),_{12}=\epsilon_2\big(\Omega^2-\omega^2\big),_{12}.
\end{eqnarray*}
This last equation is equivalent to
\begin{eqnarray*}\nonumber
\Delta_\epsilon\bigg(\frac{\omega,_{12}}{\omega}\bigg)+\epsilon_2\big(\omega^2\big),_{12}=\Delta_\epsilon\bigg(\frac{\Omega,_{12}}{\Omega}\bigg)+\epsilon_2\big(\Omega^2\big),_{12}.
\end{eqnarray*}
Since $ \omega$ is a solution for the pseudo-Calapso equation we obtain that $\Omega=\epsilon_1\sqrt{2}e^\varphi H'$ is also a solution of the pseudo-Calapso
equation. The proof is complete.

\end{proof}

\section{Classification of Dupin surfaces in Pseudo-Euclidean 3-space}

In this section, we provide a classification of Dupin surfaces parametrized by lines of curvature in pseudo-Euclidean 3-space $E^3$, with two distinct principal curvatures.
\begin{definition}{\em
An immersion $X:U\subseteq \mathbb{R}^{2}\rightarrow E^{3}$ is a \textit{Dupin surface} if each principal curvature is
constant along its corresponding line of curvature.}
\end{definition}

Let $X:U\subseteq \mathbb{R}^{2}\rightarrow E^{3}$, be a proper Dupin surface parametrized by lines of curvature, with distinct
principal curvature, $-\lambda_i$, $1\leq i\leq 2$, and let $N:U\subseteq \mathbb{R}^2\rightarrow E^{3}$ be a unit
normal vector field of X. Then\\
\begin{eqnarray}
\langle X,_i,X,_j\rangle &=&\delta_{ij}g_{ij},\hspace{1cm} 1\leq i, j\leq
2,
\label{metricageral}\\
N,_i&=&\lambda_iX,_i,\hspace{1cm} 1\leq i\leq
2,\label{Nigeral}\\
\langle N,N\rangle &=& \epsilon_3,\nonumber\\
 \lambda_{i,i}&=&0,\nonumber
\end{eqnarray}
where $\langle , \rangle$ denotes the pseudo-Euclidean 3-space on $E^{3}$,
$\epsilon_j^2=1$, $1\leq j\leq3$.

Moreover for $1\leq i\neq j\leq 2$, we have
\begin{eqnarray}
&&X,_{ij}-\Gamma^i_{ij}X,_i-\Gamma^j_{ij}X,_j=0, \label{eqXegeral}\\
&&\Gamma^i_{ij}=\frac{\lambda_{i,j}}{\lambda_j-\lambda_i},\label{codazi}
\label{simcristgeral}
\end{eqnarray}
where $\Gamma^i_{ij}$ are the Christoffel symbols.

The Christoffel symbols in terms of the metric
(\ref{metricageral}) are given by
\begin{equation}\label{simbmetrica}
\Gamma^i_{ii}=\frac{g_{ii,i}}{2g_{ii}},\hspace{0,4cm}\Gamma^j_{ii}=-\frac{g_{ii,j}}{2g_{jj}},\hspace{0,4cm}\Gamma^i_{ij}=\frac{g_{ii,j}}{2g_{ii}},
\end{equation}
where $1\leq i, j\leq 2$ are distinct.\\
 It follows from
(\ref{simbmetrica}), that
\begin{eqnarray}\label{gammaiij}
\Gamma_{ii}^j=-\Gamma_{ij}^i\frac{g_{ii}}{g_{jj}}
\end{eqnarray}
From (\ref{Nigeral}) and (\ref{gammaiij}), we get
\begin{equation}\label{xiigeral}
X,_{ii}=\Gamma_{ii}^iX,_i-\Gamma_{ij}^i\frac{g_{ii}}{g_{jj}}X,_j-\lambda_ig_{ii}N.
\end{equation}

\begin{theorem} {Let $X:U\subseteq \mathbb{R}^{2}\rightarrow E^{3}$, be a Dupin surface parametrized by lines
of curvature, with two distinct principal curvatures $-\lambda_1$ and $-\lambda_2$. Then there is a change in each coordinate
separately so that the first fundamental form is given by
\begin{eqnarray}\label{1forma}
I=\frac{1}{(\lambda_2-\lambda_1)^2}\bigg(\epsilon_1du_1^2+\epsilon_2du_2^2\bigg),
\end{eqnarray}
i.e. $X$ is a $\epsilon$-isothermic surface.}
\end{theorem}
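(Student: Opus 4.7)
The plan is to use the Codazzi equation together with the Dupin property to pin down the metric coefficients $g_{11}$ and $g_{22}$ up to a single‑variable factor in each coordinate, and then to absorb those factors by reparametrizing each axis independently.

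First, the Dupin conditions $\lambda_{i,i}=0$ force $\lambda_1=\lambda_1(u_2)$ and $\lambda_2=\lambda_2(u_1)$. Equating the two expressions for the Christoffel symbol $\Gamma^i_{ij}$ coming from (\ref{codazi}) and (\ref{simbmetrica}) gives, for $i\neq j$,
$$\frac{g_{ii,j}}{2\,g_{ii}} \;=\; \frac{\lambda_{i,j}}{\lambda_j-\lambda_i}.$$
Since $(\lambda_2-\lambda_1)_{,2}=-\lambda_{1,2}$ and $(\lambda_2-\lambda_1)_{,1}=\lambda_{2,1}$, in both cases the right‑hand side equals $-\bigl(\log|\lambda_2-\lambda_1|\bigr)_{,j}$. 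Integrating in $u_j$ shows that $g_{ii}(\lambda_2-\lambda_1)^2$ is a function of $u_i$ alone; writing it as $\epsilon_i\,h_i(u_i)^2$, where $\epsilon_i\in\{-1,+1\}$ is the (locally constant) sign of $g_{ii}$ and $h_i>0$, one obtains
$$I \;=\; \frac{1}{(\lambda_2-\lambda_1)^2}\,\bigl(\epsilon_1\,h_1(u_1)^2\,du_1^2 \;+\; \epsilon_2\,h_2(u_2)^2\,du_2^2\bigr).$$

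Next, I perform the coordinate change $\tilde u_i:=\int h_i(t)\,dt$ independently on each axis; this is a valid diffeomorphism (since $h_i>0$) and, being of the form $\tilde u_i=\tilde u_i(u_i)$, it preserves the coordinate curves and hence the lines of curvature. In the new variables the factors $h_i$ are absorbed and the first fundamental form takes precisely the form (\ref{1forma}), exhibiting $X$ as an $\epsilon$-isothermic surface with conformal factor $e^{2\varphi}=(\lambda_2-\lambda_1)^{-2}$.

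The only delicate point is the sign bookkeeping: one must verify that $g_{ii}$ is nowhere zero, so that $\epsilon_i$ is well defined and locally constant. This follows because the coordinate vectors are principal directions of a nondegenerate induced metric, hence $g_{ii}\neq 0$ on the connected parameter domain. Beyond this, the argument is a straightforward integration of the Codazzi equation combined with the separated structure of $\lambda_1,\lambda_2$ enforced by the Dupin hypothesis.
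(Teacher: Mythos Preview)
Your argument is correct and follows essentially the same route as the paper: equate the Codazzi expression (\ref{codazi}) for $\Gamma^i_{ij}$ with the metric expression (\ref{simbmetrica}), use the Dupin condition $\lambda_{i,i}=0$ to rewrite the right-hand side as a $u_j$-derivative of $\log(\lambda_2-\lambda_1)^{-2}$, integrate to obtain $g_{ii}=\epsilon_i f_i(u_i)^2/(\lambda_2-\lambda_1)^2$, and absorb $f_i$ by the separate coordinate change $d\tilde u_i=f_i\,du_i$. Your added remarks on the sign $\epsilon_i$ and on preservation of the curvature lines are welcome but do not alter the strategy.
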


\begin{proof} Using (\ref{codazi}) and (\ref{simbmetrica}), we have
\begin{eqnarray*}\nonumber
\frac{\lambda_{1,2}}{\lambda_2-\lambda_1}=\frac{g_{11,2}}{2g_{11}},\hspace{1cm}\frac{\lambda_{2,1}}{\lambda_1-\lambda_2}=\frac{g_{22,1}}{2g_{22}}.
\end{eqnarray*}
These last two equations can be rewritten as
\begin{eqnarray*}\nonumber
\bigg[\ln\bigg(\frac{1}{\lambda_2-\lambda_1}\bigg)^2\bigg]_{,2}=\big(\ln|g_{11}|\big)_{,2},\hspace{1cm}\bigg[\ln\bigg(\frac{1}{\lambda_2-\lambda_1}\bigg)^2\bigg]_{,1}=\big(\ln|g_{22}|\big)_{,1},
\end{eqnarray*}
so that
\begin{eqnarray*}\nonumber
g_{11}=\epsilon_1\bigg(\frac{f_1}{\lambda_2-\lambda_1}\bigg)^2,\hspace{1cm}g_{22}=\epsilon_2\bigg(\frac{f_2}{\lambda_2-\lambda_1}\bigg)^2,
\end{eqnarray*}
where $\epsilon_i^2=1$, $f_i$ $1\leq i\leq2$ are an arbitrary real functions of the
variable $u_i$.

Therefore, using the coordinate change $d\widetilde{u}_i=f_idu_i$, we have that $X$ is $\epsilon$-isothermic.
\end{proof}

From now on we will consider surfaces parametrized by lines of curvature with $\epsilon$-isothermic parameters whose first fundamental
form is (\ref{1forma}) and obtain all surfaces Dupin, with two distinct principal curvatures $-\lambda_1$ and $-\lambda_2$.\\

\begin{theorem} {Let $X:U\subseteq \mathbb{R}^{2}\rightarrow E^{3}$, be a $\epsilon$-isothermic Dupin surface with two distinct principal curvatures $-\lambda_1$ and $-\lambda_2$. Then\\
(i) $\lambda_1\lambda_2=0$,\\
or \\
(ii)
\begin{enumerate}
  \item If $b_1\neq 0$ and $(1+b_1) \neq 0$, the principal curvatures are given by
  \begin{eqnarray}
  \label{eqa}
  \lambda_2&=& -\frac{\epsilon_1c_1}{b_1}+A_1e^{\sqrt{b_1\epsilon_1}u_1}+A_2e^{-\sqrt{b_1\epsilon_1}u_1},\\
  \nonumber
  \lambda_1&=&-\frac{\epsilon_1c_1}{1+b_1}+A_3e^{\sqrt{-\epsilon_2(1+b_1)} u_2}+A_4e^{-\sqrt{-\epsilon_2(1+b_1)}u_2}.
  \end{eqnarray}
  where
  \begin{equation}
  \label{eqb}
  c_1^2-4b_1 (1+b_1) (b_1 A_1 A_2-(1+b_1) A_3 A_4)=0.
  \end{equation}
  \item If $b_1=0$, the principal curvatures are given by
  \begin{eqnarray}
  \label{eqc}
  \lambda_2&=& \frac{c_1}{2} u_1^2 + A_5 u_1 +A_6,\\
  \nonumber
  \lambda_1&=&- \epsilon_1c_1 + A_7 \cos{\sqrt{\epsilon_2} u_2} + A_8 \sin{\sqrt{\epsilon_2} u_2}.
  \end{eqnarray}
  where
  \begin{equation}
  \label{eqd}
  \epsilon_1A_5^2-c_1^2 -2\epsilon_1c_1 A_6+A_7^2+A_8^2=0.
  \end{equation}
  \item If $b_2\neq 0$ and $(1+b_2) \neq 0$, the principal curvatures are given by
  \begin{eqnarray}
  \label{eqe}
  \lambda_2&=&-\frac{\epsilon_2c_2}{1+b_2}+B_1e^{\sqrt{-\epsilon_1(1+b_2)} u_1}+B_2e^{-\sqrt{-\epsilon_1(1+b_2)}u_1},\\
  \nonumber
  \lambda_1&=& -\frac{\epsilon_2c_2}{b_2}+B_3e^{\sqrt{b_2\epsilon_2}u_2}+B_4e^{-\sqrt{b_2\epsilon_2}u_2}.
 \end{eqnarray}
  where
  \begin{equation}
  \label{eqf}
  c_2^2+4b_2 (1+b_2) ((1+b_2) B_1 B_2-b_2 B_3 B_4)=0.
  \end{equation}
  \item If $b_2=0$, the principal curvatures are given by
  \begin{eqnarray}
  \label{eqg}
  \lambda_2&=&- \epsilon_2c_2 + B_5 \cos{\sqrt{\epsilon_1} u_1} + B_6 \sin{\sqrt{\epsilon_1} u_1},\\
  \nonumber
 \lambda_1&=& \frac{c_2}{2} u_2^2 + B_7 u_2 +B_8.
   \end{eqnarray}
 where
  \begin{equation}
  \label{eqh}
  \epsilon_2B_7^2-c_2^2 -2\epsilon_2c_2 B_8+B_5^2+B_6^2=0.
  \end{equation}
\end{enumerate}
Conversely, if $\lambda_i:U\subseteq R^{2}\rightarrow R$, $1\leq i\leq2$, be real functions, distinct at each point, satisfy
the condition $(i)$ or $(ii)$, then there is a $\epsilon$-isothermic Dupin surface $X:U\subseteq R^{2}\rightarrow E^{3}$, whose principal curvatures are the
functions $-\lambda_i$.}
\end{theorem}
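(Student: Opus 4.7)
The plan is to exploit the Dupin condition to reduce everything to a single master PDE in $\lambda_{1}$ and $\lambda_{2}$ and then separate variables. Since $X$ is Dupin and parametrized by lines of curvature, the condition $\lambda_{i,i}=0$ forces $\lambda_{1}=\lambda_{1}(u_{2})$ and $\lambda_{2}=\lambda_{2}(u_{1})$. Writing $\varphi=-\ln|\lambda_{2}-\lambda_{1}|$ for the conformal factor in (\ref{1forma}) and reading the normal component off (\ref{xiigeral}), the second fundamental form coefficients become $e=-\epsilon_{1}\lambda_{1}e^{2\varphi}$ and $g=-\epsilon_{2}\lambda_{2}e^{2\varphi}$, so the Gauss equation (\ref{gaussequation}) specializes to $\Delta_{\epsilon}\varphi=-\epsilon_{1}\lambda_{1}\lambda_{2}e^{2\varphi}$. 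Computing $\Delta_{\epsilon}\varphi$ explicitly and clearing the denominator $(\lambda_{2}-\lambda_{1})^{2}$ yields the master scalar equation
\begin{equation*}
-\lambda_{2,11}(\lambda_{2}-\lambda_{1})+(\lambda_{2,1})^{2}+\epsilon\lambda_{1,22}(\lambda_{2}-\lambda_{1})+\epsilon(\lambda_{1,2})^{2}=-\epsilon_{1}\lambda_{1}\lambda_{2},
\end{equation*}
from which the entire classification will be extracted.

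If $\lambda_{1}\lambda_{2}\equiv 0$ we land in case (i) and nothing further is claimed. Otherwise $\lambda_{2,1}$ and $\lambda_{1,2}$ are not identically zero, and differentiating the master equation first in $u_{1}$ and then in $u_{2}$ kills the $(\lambda_{2,1})^{2}$, $(\lambda_{1,2})^{2}$ and the pure $\lambda_{1},\lambda_{2}$ contributions, leaving
\begin{equation*}
\frac{\lambda_{2,111}}{\lambda_{2,1}}+\epsilon\,\frac{\lambda_{1,222}}{\lambda_{1,2}}=-\epsilon_{1}.
\end{equation*}
Since the left hand side is a sum of a function of $u_{1}$ and a function of $u_{2}$, standard separation of variables introduces a constant which I would normalize as $b_{1}\epsilon_{1}$, and one integration in each variable produces the pair of linear second-order ODEs
\begin{equation*}
\lambda_{2,11}=b_{1}\epsilon_{1}\lambda_{2}+c_{1},\qquad \lambda_{1,22}=-\epsilon_{2}(1+b_{1})\lambda_{1}+\widetilde{c}_{1}.
\end{equation*}

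The four cases of (ii) correspond to the qualitative behaviour of these ODEs: the generic regime $b_{1}\neq 0,\,1+b_{1}\neq 0$ yields (\ref{eqa}), and the same regime rewritten under the alternative parametrization $b_{2}=-(1+b_{1})$ gives (\ref{eqe}); the boundary $b_{1}=0$ degenerates the $\lambda_{2}$-ODE to $\lambda_{2,11}=c_{1}$ and yields the polynomial form (\ref{eqc}); and $b_{1}=-1$ yields (\ref{eqg}) by the symmetric degeneration of the $\lambda_{1}$-ODE. With the general solutions written out, I would substitute them back into the master equation, using the first integrals $(\lambda_{2,1})^{2}=b_{1}\epsilon_{1}\lambda_{2}^{2}+2c_{1}\lambda_{2}+\mathrm{const}$ and the analogous identity for $(\lambda_{1,2})^{2}$. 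The $\lambda_{1}^{2}$, $\lambda_{2}^{2}$ and $\lambda_{1}\lambda_{2}$ terms cancel automatically; the surviving linear $\lambda_{1}$ and $\lambda_{2}$ terms cancel precisely when $\widetilde{c}_{1}=-\epsilon c_{1}$, which fixes the relationship between the two integration constants; and the remaining constant balance is exactly the quadratic compatibility relation (\ref{eqb}), (\ref{eqd}), (\ref{eqf}) or (\ref{eqh}) among the amplitudes and $c_{1}$ (respectively $c_{2}$).

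For the converse, given $\lambda_{1}(u_{2})$ and $\lambda_{2}(u_{1})$ of one of the listed forms I would prescribe (\ref{1forma}) as the first fundamental form and $II=-\epsilon_{1}\lambda_{1}e^{2\varphi}du_{1}^{2}-\epsilon_{2}\lambda_{2}e^{2\varphi}du_{2}^{2}$ as the second; the Codazzi equations (\ref{gausscodazi}) follow automatically from the conformal choice $\varphi=-\ln|\lambda_{2}-\lambda_{1}|$, the quadratic constraints are precisely what is needed for the Gauss equation (\ref{gaussequation}) to hold, and the fundamental theorem of surface theory in pseudo-Euclidean $3$-space then produces the desired Dupin immersion $X$. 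I expect the most delicate step to be the back-substitution in the forward direction: the algebra is routine, but the three signs $\epsilon_{1}$, $\epsilon_{2}$ and $\epsilon=\epsilon_{1}\epsilon_{2}$ must be tracked carefully through each of the four parametric families, and the mechanism by which the first-integral identities for $(\lambda_{2,1})^{2}$ and $(\lambda_{1,2})^{2}$ force $\widetilde{c}_{1}=-\epsilon c_{1}$ -- thereby collapsing two free integration constants into one and unlocking the stated single-parameter quadratic relation -- is the point where the proof either works cleanly or breaks down.
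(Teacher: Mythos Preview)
Your approach is the same as the paper's: derive a master scalar equation from the Gauss equation with $\varphi=-\ln|\lambda_2-\lambda_1|$, separate variables to obtain linear second-order ODEs for $\lambda_1(u_2)$ and $\lambda_2(u_1)$, solve them case by case, and recover the quadratic compatibility constraint by back-substitution. Your master equation agrees (up to the factor $\epsilon_1$) with the paper's equation, and your separated identity $\lambda_{2,111}/\lambda_{2,1}+\epsilon\,\lambda_{1,222}/\lambda_{1,2}=-\epsilon_1$ is equivalent to what the paper obtains.

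There is, however, a real gap in your case dichotomy. You claim that if $\lambda_1\lambda_2\not\equiv 0$ then \emph{both} $\lambda_{2,1}$ and $\lambda_{1,2}$ are not identically zero; this is unjustified and is essential to your argument, since you divide by both derivatives to reach the separated equation. The situation $\lambda_1=\text{const}\neq 0$ with $\lambda_2$ non-constant is admissible (it sits inside family~1 with $A_3=A_4=0$), and your derivation does not cover it. The paper sidesteps this by differentiating the master equation only once, in $u_1$, under the sole assumption $h_1'\neq 0$, to get
\[
\frac{\epsilon_1 h_1'''}{h_1'}=\frac{h_2+\epsilon_2 h_2''+\epsilon_1 h_1''}{h_1-h_2},
\]
and then differentiating this identity again in $u_1$: the two $\epsilon_1 h_1'''$ contributions cancel, forcing $(\epsilon_1 h_1'''/h_1')_{,1}=0$ without ever dividing by $h_2'$. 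Substituting the resulting ODE $h_1''-\epsilon_1 b_1 h_1=c_1$ back into that same identity then yields $h_2''+\epsilon_2(1+b_1)h_2=-\epsilon c_1$ directly, so the relation $\widetilde c_1=-\epsilon c_1$ that you postpone to the back-substitution step is obtained immediately. Your plan is otherwise correct; either handle the ``exactly one curvature constant'' case separately, or adopt the paper's single-differentiation route, which absorbs it automatically.
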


\begin{proof} From Theorem 2, the first fundamental form of
$X$ is  given by (\ref{1forma}) and by Gauss equation (\ref{gaussequation}) with $\varphi=\frac{-1}{2}\ln(\lambda_2-\lambda_1)^2$, using (\ref{codazi}) we have
\begin{eqnarray}\label{gauss2}
\frac{\lambda_1\lambda_2}{\big(\lambda_2-\lambda_1\big)^2}+\epsilon_2\Gamma^1_{12,2}+\epsilon_1\Gamma^2_{12,1}=0.
\end{eqnarray}
If $\lambda_1$ and $\lambda_2$ are constant, then using
(\ref{codazi}), we have $\Gamma^1_{12}=0$ and $\Gamma^2_{12}=0$.
Thus we obtain $\lambda_1\lambda_2=0$.

If $\lambda_1=h_2(u_2)$ and $\lambda_2=h_1(u_1)$, with
$h'_1(u_1)\neq 0$, using (\ref{codazi}), we obtain
\begin{eqnarray*}\nonumber
&&\Gamma^2_{12,1}=\bigg(\frac{h'_1}{h_2-h_1}\bigg)_{,1}=\frac{h''_1}{h_2-h_1}+\bigg(\frac{h'_1}{h_2-h_1}\bigg)^2,\nonumber\\
&&\Gamma^1_{12,2}=\bigg(\frac{h'_2}{h_1-h_2}\bigg)_{,2}=\frac{h''_2}{h_1-h_2}+\bigg(\frac{h'_2}{h_1-h_2}\bigg)^2.\nonumber
\end{eqnarray*}
Therefore,(\ref{gauss2}) can be rewritten as,
\begin{eqnarray}
\label{gauss21}
h_1h_2+\epsilon_2h''_2(h_1-h_2)+\epsilon_1h''_1(h_2-h_1)+\epsilon_1\big(h'_1\big)^2+\epsilon_2\big(h'_2\big)^2=0.
\end{eqnarray}
Differentiating (\ref{gauss21}) with respect to $u_1$ and using that
$h'_1\neq 0$, we have
\begin{eqnarray}\label{eq2}
\frac{\epsilon_1h'''_1}{h'_1}=\frac{1}{h_1-h_2}\bigg[h_2+\epsilon_2h''_2+\epsilon_1h''_1\bigg].
\end{eqnarray}
Differentiating (\ref{eq2}) with respect to $u_1$, we get
\begin{eqnarray*}\nonumber
\bigg(\epsilon_1\frac{h'''_1}{h'_1}\bigg)_{,1}=0.
\end{eqnarray*}
Therefore
\begin{eqnarray}
\label{eqh1}
h''_1-\epsilon_1b_1h_1=c_1,
\end{eqnarray}
where $b_1$ and $c_1$ are constants.\\
Substituting (\ref{eqh1}) in (\ref{eq2}), we have
\begin{eqnarray}
\label{eqh2}
h''_2+\epsilon_2(1+b_1)h_2=-\epsilon_1\epsilon_2c_1.
\end{eqnarray}
\begin{enumerate}
  \item If $b_1\neq 0$ and $(1+b_1) \neq 0$, the solutions of (\ref{eqh1}) and (\ref{eqh2}) are given by (\ref{eqa}).
  Using (\ref{eqa}) in (\ref{gauss21}) we get (\ref{eqb}).
  \item If $b_1=0$, the solutions of (\ref{eqh1}) and (\ref{eqh2}) are given by (\ref{eqc}).
  Using (\ref{eqc}) in (\ref{gauss21}) we get (\ref{eqd}).\\
If $\lambda_1=h_2(u_2)$ and $\lambda_2=h_1(u_1)$, with
$h'_2(u_2)\neq 0$, with similarly calculus, differentiating (\ref{gauss21}) with respect $u_2$, we obtain
\begin{eqnarray}
\label{eqA}
h''_2-\epsilon_2b_2h_2=c_2,\hspace{1cm}
h''_1+\epsilon_1(1+b_2)h_1=-\epsilon_2\epsilon_1c_2.
\end{eqnarray}
  \item If $b_2\neq 0$ and $(1+b_2) \neq 0$, the solutions of (\ref{eqA}) are given by (\ref{eqe}).\\
  Using (\ref{eqe}) in (\ref{gauss21}) we get (\ref{eqf}).
  \item If $b_2=0$, the solutions of (\ref{eqA}) are given by (\ref{eqg}). Using (\ref{eqg}) in (\ref{gauss21}) we get (\ref{eqh}).
\end{enumerate}
\end{proof}

\begin{theorem} {Let $X$ be a Dupin surface as in Theorem 3. If $-\lambda_1$ and $-\lambda_2$ are constant then $X$ is a cylinder.}
\end{theorem}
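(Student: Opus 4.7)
The plan is to combine Theorem 3 with the Gauss--Weingarten equations. By Theorem 3, an $\epsilon$-isothermic Dupin surface with two distinct principal curvatures falls into case (i), $\lambda_1\lambda_2=0$, or into one of the four subcases of (ii). I would first rule out case (ii) under the hypothesis that $\lambda_1$ and $\lambda_2$ are both constant: each of the formulas (\ref{eqa}), (\ref{eqc}), (\ref{eqe}), (\ref{eqg}) displays $\lambda_i$ as a genuinely non-constant function of $u_i$ unless all amplitude coefficients vanish; and if those coefficients vanish, the constraints (\ref{eqb}), (\ref{eqd}), (\ref{eqf}) or (\ref{eqh}) force $c_1=0$ (resp.\ $c_2=0$), which pushes $\lambda_1=\lambda_2=0$, contradicting distinctness. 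Hence only case (i) is possible, and after possibly interchanging $u_1\leftrightarrow u_2$ (equivalently $\epsilon_1\leftrightarrow\epsilon_2$) I may take $\lambda_1=0$ and $\lambda_2=c$ with $c\neq 0$.

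Next I would exploit that the first fundamental form (\ref{1forma}) reduces to
$$ I=\frac{1}{c^{2}}\bigl(\epsilon_1\, du_1^{2}+\epsilon_2\, du_2^{2}\bigr), $$
so all $g_{ij}$ are constant, while $\lambda_{i,j}=0$ for every $i,j$. By (\ref{simbmetrica}) and (\ref{codazi}) every Christoffel symbol vanishes. Substitution into (\ref{eqXegeral}) gives $X_{,12}=0$; substitution into (\ref{xiigeral}) with $\lambda_1=0$ gives $X_{,11}=0$; and (\ref{Nigeral}) gives $N_{,1}=0$.

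From $X_{,11}=0$ and $X_{,12}=0$ I obtain that $X_{,1}$ is independent of both variables, so $X_{,1}=V$ for a constant vector $V\in E^{3}$. Integrating once more yields
$$ X(u_1,u_2)=u_1 V+W(u_2) $$
for a curve $W$ depending only on $u_2$, which is the defining parametrization of a cylinder with rulings parallel to $V$ and directrix $W$. The ruling direction is non-degenerate, since $\langle V,V\rangle=g_{11}=\epsilon_1/c^{2}\neq 0$, and $\langle V,W'\rangle=g_{12}=0$, so $W$ is pseudo-orthogonal to the rulings. Therefore $X$ is a (non-degenerate) cylinder in $E^{3}$.

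I do not expect any serious obstacle: the main issue is simply to confirm that the four subcases of Theorem 3 (ii) are genuinely incompatible with both $\lambda_i$ being constant and distinct; once that case analysis is done, the remainder is a direct integration of a system whose Christoffel symbols all vanish, and a surface affine in one of its coordinates is a cylinder by definition.
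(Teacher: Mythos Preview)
Your proposal is correct, with one small inaccuracy and one unnecessary detour. The elimination of case (ii) is not needed: the proof of Theorem~3 already shows directly (via the Gauss equation (\ref{gauss2}) with vanishing $\Gamma^i_{ij}$) that constant $\lambda_1,\lambda_2$ forces $\lambda_1\lambda_2=0$, so the paper simply invokes this. Moreover, your claim that in each subcase of (ii) the degeneration yields $\lambda_1=\lambda_2=0$ is not quite right: for instance in (ii)(2), setting $c_1=A_5=0$ and then $A_7=A_8=0$ from (\ref{eqd}) gives $\lambda_1=0$ but $\lambda_2=A_6$ arbitrary, which is case (i) rather than a contradiction. This does not affect your conclusion.

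The integration step differs from the paper's. The paper chooses $\lambda_2=0$, $\lambda_1=c$, uses $N_{,1}=cX_{,1}$ and $N_{,2}=0$ to write $N=cX+H_2(u_2)=H_1(u_1)$, hence $X=G_1(u_1)-G_2(u_2)$, and then solves the resulting second-order ODEs $G_1''=-\epsilon_1 G_1$, $G_2''=0$ with explicit initial data to exhibit $X$ as a concrete trigonometric or hyperbolic cylinder. Your route---observing that all Christoffel symbols vanish, so $X_{,11}=X_{,12}=0$, whence $X_{,1}$ is a constant vector $V$ and $X=u_1V+W(u_2)$---is shorter and uses only the definition of a cylinder, at the cost of not producing the explicit parametrization. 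Both arguments are valid; yours is more elementary, while the paper's yields explicit coordinates consistent with the rest of the article.
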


\begin{proof} Let $X$ be a Dupin surface as in Theorem 3 with constant principal curvature $-\lambda_1$ and
$-\lambda_2$. From Theorem 3, $\lambda_1\lambda_2=0$. If $\lambda_2=0$ and $\lambda_1=c\neq0$, then from Theorem 2, the
first fundamental form of $X$ is
$I=\frac{1}{c^2}\bigg(\epsilon_1du_1^2+\epsilon_2du_2^2\bigg)$,
$\epsilon_i^2=1$, $1\leq i\leq2$ and $X$ satisfy
\begin{eqnarray}\nonumber
&&X,_{12}=(0,0,0),\nonumber\\
&&X,_{11}=\frac{-\epsilon_1N}{c},\nonumber\\
&&X,_{22}=(0,0,0),\label{xcilindro}\\
&&N,_1=cX,_1,\nonumber\\
&&N,_2=(0,0,0).\nonumber
\end{eqnarray}
Using the last two equations of (\ref{xcilindro}), we have
\begin{eqnarray*}\nonumber
N=cX+H_2\,\,\mbox{and}\,N=H_1,\, \mbox{where}\, H_i=H_i(u_i), \,i=1,2, \mbox{are vector valued functions}.\nonumber
\end{eqnarray*}
Therefore
\begin{eqnarray}\label{cilindro}
X=G_1-G_2,
\end{eqnarray}
where $G_i=\frac{H_i}{c}$,\,$i=1,2$ are vector valued functions in $E^3$.

Differentiating (\ref{cilindro}), we have $X,_{11}=G''_1$ and
$X,_{22}=-G''_2$. \\
Thus, follows from the second and thirty
equation from (\ref{xcilindro}) that $G''_1=-\epsilon_1G_1$ and
$G_2''=0$.

Finally, giving initial conditions $X,_i(0,0)=\frac{e_i}{c},$ $i=1,2,\,$ $X(0,0)=(0,0,0)$, $N(0,0)=e_3$, we get
$G'_1(0)=\frac{e_1}{c}$, $G'_2(0)=-\frac{e_2}{c}$ and
$G_i(0)=\frac{e_3}{c}$.\\
So, if $\epsilon_1=1$, we have
\begin{eqnarray*}\nonumber
G_2=-\frac{e_2}{c}u_2+\frac{e_3}{c}\hspace{1cm}\mbox{and}\hspace{1cm}G_1=\frac{e_1}{c}\sin u_1 +\frac{e_3}{c}\cos u_1 ,\nonumber
\end{eqnarray*}
where $e_1=(1,0,0)$, $e_2=(0,1,0)$ and $e_3=(0,0,1)$.\\
Therefore, using (\ref{cilindro}), we get
\begin{eqnarray*}\nonumber
X=\frac{1}{c}\bigg(\sin u_1 ,u_2,\cos u_2 -1\bigg),\nonumber
\end{eqnarray*}
in this case, $X$ is a cylinder.\\
If $\epsilon_1=-1$, we have
\begin{eqnarray*}\nonumber
G_2=-\frac{e_2}{c}u_2+\frac{e_3}{c}\hspace{1cm}\mbox{and}\hspace{1cm}G_1=\frac{e_1}{c}\sinh u_1 +\frac{e_3}{c}\cosh u_1 ,\nonumber
\end{eqnarray*}
where $e_1=(1,0,0)$, $e_2=(0,1,0)$ and $e_3=(0,0,1)$.\\
Therefore, using (\ref{cilindro}), we get
\begin{eqnarray*}\nonumber
X=\frac{1}{c}\bigg(\sinh u_1 ,u_2,\cosh u_2 -1\bigg).\nonumber
\end{eqnarray*}
Thus, $X$ is a cylinder. The proof is complete.
\end{proof}

\begin{theorem} {Let $X$ be a Dupin surface as in Theorem 3. If $-\lambda_1$ or $-\lambda_2$ are not constant,
then $X$  is given by
\begin{eqnarray}\label{xgeral}
X=\frac{G_2-G_1}{h_1-h_2},
\end{eqnarray}
where the vector valued functions $G_i(x_i)$, $1\leq i\leq2$, satisfy
\begin{eqnarray}\nonumber
&&G''_i-\epsilon_ib_iG_i=v_i,\nonumber\\
&&v_1=\frac{\epsilon_1}{h_2(0)-h_1(0)}\bigg(-\epsilon_1h'_1(0) ,
-\epsilon_2h'_2(0) ,
-h_2(0)-b_1(h_2(0)-h_1(0))\bigg),\label{xcondicao}\\
&&v_2=-\epsilon_1\epsilon_2v_1,\nonumber\\
&&G_i(0)=e_3,\hspace{1cm}G_i'(0)=e_i,\nonumber\\
\end{eqnarray}
where $h_1=\lambda_2$, $h_2=\lambda_1$ and constant $b_i$, $1\leq i\leq2$, are given by Theorem 3.

Conversely, if the vector valued functions $G_i$ are given by (\ref{xcondicao}) with $h_i$, $1\leq i\leq2$, given by Theorem 3. Then (\ref{xgeral}) is a $\epsilon$-isothermic Dupin surface whose principal curvature are the functions $-\lambda_i$ where $\lambda_1=h_2(u_2)$ and $\lambda_2=h_1(u_1)$.}
\end{theorem}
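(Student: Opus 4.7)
The approach is to exploit a hidden separation of variables by multiplying the mixed-partial equation for $X$ by $Q := h_1-h_2$. From the Codazzi identity (\ref{codazi}) one computes $\Gamma^1_{12} = h'_2/Q$ and $\Gamma^2_{12} = -h'_1/Q$, so the equation $X,_{12} = \Gamma^1_{12}X,_1 + \Gamma^2_{12}X,_2$ from (\ref{eqXegeral}) rearranges to $(QX),_{12} = 0$. Consequently $QX$ decomposes as $G_2(u_2) - G_1(u_1)$ for vector-valued functions $G_i$, which gives the representation (\ref{xgeral}) at once.

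Next I would derive the ODE for $G_1$. Differentiating $G_1 = G_2 - QX$ twice in $u_1$ gives $G''_1 = -h''_1 X - 2h'_1 X,_1 - Q X,_{11}$; substituting (\ref{xiigeral}) for $X,_{11}$, with $\Gamma^1_{11} = -h'_1/Q$, $g_{11}/g_{22} = \epsilon$, and $\lambda_1 = h_2$, yields
\begin{equation*}
G''_1 = -h''_1 X - h'_1 X,_1 + \epsilon h'_2 X,_2 + \frac{\epsilon_1 h_2}{Q}N.
\end{equation*}
Eliminating $h''_1$ via (\ref{eqh1}) and $G_1$ via $G_1 = G_2 - QX$ produces an explicit formula for $G''_1 - \epsilon_1 b_1 G_1$ in terms of $X, X,_i, N, h_2, G_2$. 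Since the left-hand side depends only on $u_1$, I would prove constancy by differentiating once more in $u_1$: after substituting (\ref{xiigeral}), the expression $X,_{12}=(h'_2 X,_1 - h'_1 X,_2)/Q$, and $N,_1 = h_2 X,_1$, the coefficients of $X,_2$ and $N$ cancel identically, and the remaining coefficient of $X,_1$ reduces to a scalar combination that vanishes by the first integral $(h'_1)^2 = \epsilon_1 b_1 h_1^2 + 2c_1 h_1 + C_1$ of (\ref{eqh1}), its analogue derived from (\ref{eqh2}), and the Gauss equation (\ref{gauss21}) (which forces the compatibility $C_1 + \epsilon C_2 = 0$). The constant value $v_1$ is then identified by evaluating at $(u_1,u_2)=(0,0)$ using the initial data $X(0,0)=0$, $N(0,0)=e_3$, and $X,_i(0,0)$ fixed by the normalizations $G_i(0)=e_3$ and $G'_i(0)=e_i$. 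The derivation for $G_2$ and the relation $v_2 = -\epsilon v_1$ is symmetric under the interchange $u_1 \leftrightarrow u_2$.

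For the converse, given $G_i$ solving the stated ODEs with the prescribed initial data, setting $X = (G_2-G_1)/Q$ and reversing the computations shows that $X,_i$ realize the first fundamental form (\ref{1forma}), that $N,_i = \lambda_i X,_i$ holds for an appropriately chosen unit normal, and hence that $X$ is an $\epsilon$-isothermic Dupin surface with principal curvatures $-h_2$ and $-h_1$. The principal obstacle is the constancy verification in the middle paragraph: the cancellation of the $X,_1$-coefficient rests on a nontrivial algebraic identity combining the first integrals of the ODEs for $h_1$ and $h_2$ with (\ref{gauss21}), and without it the separation scheme does not close. A subtle secondary point is fixing the signs of $X,_i(0,0)$ (with $Q_0 := h_1(0)-h_2(0)$, one needs $X,_1(0,0) = -e_1/Q_0$ and $X,_2(0,0) = e_2/Q_0$) so that the differential relations yield exactly $G'_i(0)=e_i$, which in turn produce the explicit form of $v_1$ given in (\ref{xcondicao}).
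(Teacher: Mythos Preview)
Your approach is correct and runs largely parallel to the paper's, with one genuine methodological difference in how the constancy of $G''_i-\epsilon_i b_i G_i$ is established.  Your observation $(QX),_{12}=0$ is equivalent to the paper's opening move: the paper integrates the Weingarten relations $N,_1=h_2X,_1$ and $N,_2=h_1X,_2$ to get $N=h_2X+G_2(u_2)=h_1X+G_1(u_1)$, whence $QX=G_2-G_1$ and, as a bonus, the closed form $N=(h_1G_2-h_2G_1)/Q$ for the normal.

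The divergence is in the constancy step.  You compute $G''_1-\epsilon_1 b_1 G_1$ by itself and propose to show that its $u_1$-derivative vanishes via the structure equations and first integrals.  The paper instead forms $\epsilon_2 X,_{11}-\epsilon_1 X,_{22}$ from (\ref{eqxx}); after substituting the expressions for $X,_{ii}$ this collapses directly to
\[
\epsilon_1\bigl(G''_1-\epsilon_1 b_1 G_1\bigr)=-\epsilon_2\bigl(G''_2+\epsilon_2(1+b_1)G_2\bigr),
\]
and since the left side depends only on $u_1$ and the right only on $u_2$, separation of variables gives constancy immediately, with $v_2=-\epsilon v_1$ for free.  This bypasses entirely the ``nontrivial algebraic identity'' you flag as the chief obstacle.  (Even along your route, the Gauss relation (\ref{gauss21}) together with the second-order ODEs (\ref{eqh1})--(\ref{eqh2}) already kill the $X,_1$-coefficient in your derivative; the first integrals are not actually required.)  The paper then substitutes into the \emph{second} equation of (\ref{eqxx}) separately, obtaining an additional relation (\ref{eq5}) which it shows is constant and uses with the initial data $X(0,0)=0$, $N(0,0)=e_3$, $X,_i(0,0)=\mp e_i/Q_0$ to identify $v_1$; your evaluation scheme reaches the same endpoint.
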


\begin{proof} Let $X$ be a Dupin surface as in Theorem 3 with principal curvatures $-\lambda_i$ where
$\lambda_1=h_2(u_2)$ and $\lambda_2=h_1(u_1)$, $h'_1\neq0$. From Theorem 2, the first fundamental form of $X$ is given by
$I=\frac{1}{(h_1-h_2)^2}\bigg(\epsilon_1du_1^2+\epsilon_2du_2^2\bigg)$, $\epsilon_i^2=1$, $1\leq i\leq2$ and $X$ satisfy
\begin{eqnarray}\nonumber
&&X,_{12}-\Gamma^2_{12}X,_2-\Gamma^1_{12}X,_1=0,\nonumber\\
&&X,_{11}-\Gamma^2_{12}X,_1+\epsilon_1\epsilon_2\Gamma^1_{12}X,_2+\frac{\epsilon_1h_2N}{(h_2-h_1)^2}=0,\nonumber\\
&&X,_{22}-\Gamma^1_{12}X,_2+\epsilon_1\epsilon_2\Gamma^2_{12}X,_1+\frac{\epsilon_2h_1N}{(h_2-h_1)^2}=0 ,\label{eqxx}\\
&&N,_1=h_2X,_1,\nonumber\\
&&N,_2=h_1X,_2.\nonumber
\end{eqnarray}
Using (\ref{codazi}), we obtain
\begin{eqnarray}
&&\Gamma^2_{12}=\frac{h'_1}{h_2-h_1},\hspace{1cm}\Gamma^1_{12}=\frac{h'_2}{h_1-h_2},\label{eq3}\\
&&\Gamma^2_{12,1}-\big(\Gamma^2_{12}\big)^2=\frac{h''_1}{h_2-h_1},\hspace{1cm}\Gamma^1_{12,2}-\big(\Gamma^1_{12}\big)^2=\frac{h''_2}{h_1-h_2}.\nonumber
\end{eqnarray}
The last two equations of (\ref{eqxx}), we get
\begin{eqnarray*}\nonumber
N=h_2X+G_2\hspace{1cm}\mbox{and}\hspace{1cm}N=h_1X+G_1.\nonumber
\end{eqnarray*}
Hence
\begin{eqnarray}\label{xen}
X=\frac{G_2-G_1}{h_1-h_2}\hspace{1cm}\mbox{and}\hspace{1cm}N=\frac{h_1G_2-h_2G_1}{h_1-h_2}.\label{xen}
\end{eqnarray}
Substituting (\ref{xen}) in the first equation of (\ref{eqxx}), we obtain identity.

Subtracting the second and third equation of (\ref{eqxx}), we have
\begin{eqnarray}\label{x11x22}
\epsilon_2X,_{11}-\epsilon_1X,_{22}=\frac{\epsilon_1\epsilon_2N}{h_1-h_2}+2\epsilon_2\Gamma^2_{12}X,_1-2\epsilon_1\Gamma^1_{12}X,_2.\label{x11x22}
\end{eqnarray}
On the other hand, differentiating $X$ given by (\ref{xen}) and using (\ref{eq3}), we obtain
\begin{eqnarray}\nonumber
&&X,_1=\Gamma^2_{12}X+\frac{G'_1}{h_2-h_1},\hspace{0,5cm}X,_2=\Gamma^1_{12}X+\frac{G'_2}{h_1-h_2},\nonumber\\
&&X,_{11}=\big(\Gamma^2_{12,1}-(\Gamma^2_{12})^2\big)X+2\Gamma^2_{12}X,_1+\frac{G''_1}{h_2-h_1},\label{eq4}\\
&&X,_{22}=\big(\Gamma^1_{12,2}-(\Gamma^1_{12})^2\big)X+2\Gamma^1_{12}X,_2+\frac{G''_2}{h_1-h_2}.\nonumber
\end{eqnarray}
So, using (\ref{x11x22}) and (\ref{eq3}), we get
\begin{eqnarray*}\nonumber
-N=\big(\epsilon_1h''_1+\epsilon_2h''_2\big)X+\epsilon_1G''_1+\epsilon_2G''_2.
\end{eqnarray*}
Substituting $X$ and $N$ given by (\ref{xen}) and using the Theorem 2.2, we obtain
\begin{eqnarray*}\nonumber
\epsilon_2\big(G''_2+\epsilon_2(1+b_1)G_2\big)=-\epsilon_1\big(G''_1-\epsilon_1b_1G_1\big).\nonumber
\end{eqnarray*}
Therefore, $X$ is given by (\ref{xgeral}) and the vector valued functions $G_i(x_i)$, $1\leq i\leq2$ satisfy (\ref{xcondicao}).

Now, substituting (\ref{eq4}) and (\ref{xen}), in the second equation of (\ref{eqxx}), using (\ref{eq3}), the Gauss equation, Theorem 3 and (\ref{xgeral}), we obtain
\begin{equation}
\label{eq5}
v_1h_2+\epsilon_1(1+b_1)h_2G_2+c_1G_2+\epsilon_1\epsilon_2h'_2G'_2-v_1h_1-\epsilon_1b_1h_1G_1-c_1G_1+h'_1G'_1=0.
\end{equation}
Note that
$v_1h_2+\epsilon_1(1+b_1)h_2G_2+c_1G_2+\epsilon_1\epsilon_2h'_2G'_2-v_1h_1-\epsilon_1b_1h_1G_1-c_1G_1+h'_1G'_1$,
is constant vectors. In fact, it is sufficient differentiate these
expression with respect to $u_1$ and $u_2$, using (\ref{eqh1}),
(\ref{eqh2}) and first equation of (\ref{xcondicao}).

Finally, given initial conditions
$X,_1(0,0)=\frac{-e_1}{h_1(0)-h_2(0)}$,
$X,_2(0,0)=\frac{e_2}{h_1(0)-h_2(0)}$, $N(0,0)=e_3$ and
$X(0,0)=(0,0,0)$, from (\ref{eq4}), (\ref{xen}) and (\ref{eq5}),
we have $G'_i(0)=e_i$, $G_i=e_3$, and $v_i$ is given on second
equation of (\ref{xcondicao}), $1\leq i\leq2$.

\end{proof}

\section{ Examples of Dupin surfaces and solutions of the pseudo-Calapso equation}

In this section, using the Theorems 3 and 5, we give examples of $\epsilon$-isothermic
Dupin surfaces in $E^3$ with two distinct principal curvatures and solutions for the pseudo-Calapso
equation.\\

\begin{corollary} {Let $X:U\subseteq \mathbb{R}^{2}\rightarrow E^{3}$, be a $\epsilon$-isothermic Dupin surface with two distinct principal curvatures $-\lambda_1$ and $-\lambda_2$. Then the functions $\,\omega=\displaystyle\frac{\epsilon_1\sqrt 2(\lambda_2+\lambda_1)}{2(\lambda_2-\lambda_1)},$ $\Omega=\displaystyle\frac{\epsilon_1\sqrt 2}{2}\,$, are solutions of the pseudo-Calapso equation where $\lambda_1$ and $\lambda_2$ are given in the Theorem 3.}
\end{corollary}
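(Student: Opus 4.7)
The plan is to reduce the corollary directly to Theorem 1 by making the conformal factor $e^\varphi$ and the two curvatures $H,H'$ explicit in the Dupin setting. Theorem 1 already asserts that for any $\epsilon$-isothermic surface with first fundamental form $I=e^{2\varphi}(\epsilon_1 du_1^2+\epsilon_2 du_2^2)$, the functions $\omega=\epsilon_1\sqrt{2}\,e^\varphi H$ and $\Omega=\epsilon_1\sqrt{2}\,e^\varphi H'$ solve the pseudo-Calapso equation, so all the analytic work has already been done; the only thing left to check is that these three quantities specialize to the stated expressions in the Dupin case.

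First I would invoke Theorem 2, which supplies the normal form of the first fundamental form of an $\epsilon$-isothermic Dupin surface with two distinct principal curvatures, namely $I=\frac{1}{(\lambda_2-\lambda_1)^2}(\epsilon_1 du_1^2+\epsilon_2 du_2^2)$. Comparing with (\ref{eq1forma}) identifies $e^\varphi=\frac{1}{\lambda_2-\lambda_1}$ (the branch of the square root being chosen consistently with the sign conventions of the paper). Next, since the principal curvatures are $-\lambda_1$ and $-\lambda_2$, the mean curvature in the convention used throughout the paper is $H=\frac{\lambda_1+\lambda_2}{2}$, while the skew curvature from (\ref{skeaw}) is $H'=\frac{\lambda_2-\lambda_1}{2}$. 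Substituting these three pieces into the formulas for $\omega$ and $\Omega$ from Theorem 1 yields, by a one-line computation,
$$\omega=\frac{\epsilon_1\sqrt{2}(\lambda_1+\lambda_2)}{2(\lambda_2-\lambda_1)},\qquad \Omega=\frac{\epsilon_1\sqrt{2}}{2},$$
which are exactly the expressions in the statement. Theorem 1 then guarantees that both functions solve the pseudo-Calapso equation, finishing the proof.

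There is no substantial obstacle: the entire content of the corollary is a specialization of Theorem 1 using the conformal factor from Theorem 2, and only a small bookkeeping for signs (the branch in passing from $e^{2\varphi}$ to $e^\varphi$, and the sign convention implicit in writing the principal curvatures as $-\lambda_1,-\lambda_2$) has to be verified against the normalizations introduced earlier. The payoff of the corollary, once combined with the explicit $\lambda_1,\lambda_2$ produced in Theorem 3, is a concrete family of solutions of the pseudo-Calapso equation, which is the application announced in the introduction.
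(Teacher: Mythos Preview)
Your argument is correct and follows exactly the paper's approach: the corollary is an immediate specialization of Theorem~1, using the conformal factor $e^{\varphi}=\frac{1}{\lambda_2-\lambda_1}$ supplied by Theorem~2 and the identities $H=\frac{\lambda_1+\lambda_2}{2}$, $H'=\frac{\lambda_2-\lambda_1}{2}$. The paper records this as a one-line ``follows from Theorems 1 and 3,'' and your write-up simply unpacks that line.
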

\begin{proof} The result it follows from Theorems 1 and 3.
\end{proof}

\begin{proposition} {If $X$ is a cylinder, then the solutions to the pseudo-Calapso equation are constant.}
\end{proposition}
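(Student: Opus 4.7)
The plan is to chain Theorem~4 with Corollary~1. First I would recall that, by Theorem~4, a cylinder in the Dupin classification of Theorem~3 is exactly a surface for which both principal curvatures $-\lambda_1$ and $-\lambda_2$ are constant. Hence, assuming $X$ is such a cylinder, I may take $\lambda_1,\lambda_2\in\mathbb{R}$ to be constants.

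Next, by Theorem~3 part~(i), constancy of the two (distinct) principal curvatures forces $\lambda_1\lambda_2=0$. Up to interchanging the coordinates $u_1\leftrightarrow u_2$, I may therefore assume $\lambda_2\equiv 0$ and $\lambda_1\equiv c$ for some nonzero constant $c$ (needed so that the two principal curvatures are distinct).

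Finally, I would substitute these values into the explicit pseudo-Calapso solutions produced by Corollary~1:
\begin{eqnarray*}
\omega
=\frac{\epsilon_1\sqrt{2}\,(\lambda_1+\lambda_2)}{2(\lambda_2-\lambda_1)}
=\frac{\epsilon_1\sqrt{2}\,c}{-2c}
=-\frac{\epsilon_1\sqrt{2}}{2},\qquad
\Omega=\frac{\epsilon_1\sqrt{2}}{2},
\end{eqnarray*}
both of which are manifestly constant functions of $(u_1,u_2)$, completing the proof.

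There is no real obstacle here: the whole argument is bookkeeping across the three preceding results. It is worth observing, however, that in the framework of Corollary~1 the solution $\Omega=\epsilon_1\sqrt{2}/2$ is always constant for every $\epsilon$-isothermic Dupin surface covered by Theorem~3, so the genuine content of the proposition concerns $\omega$: the rational expression $(\lambda_1+\lambda_2)/(\lambda_2-\lambda_1)$ becomes a constant precisely because the cylinder case forces both $\lambda_i$ to be constants, making the numerator and (nonzero) denominator constants.
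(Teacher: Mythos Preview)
Your proof is correct and follows essentially the same route as the paper: identify the cylinder case with constant principal curvatures $\lambda_1=c\neq 0$, $\lambda_2=0$ coming from Theorem~3(i)/Theorem~4, and then evaluate the two pseudo-Calapso solutions to obtain $\omega=-\dfrac{\epsilon_1\sqrt{2}}{2}$ and $\Omega=\dfrac{\epsilon_1\sqrt{2}}{2}$. The only cosmetic difference is that you invoke Corollary~1, whereas the paper appeals directly to Theorem~1 (computing $H=-H'=c/2$ and $e^{\varphi}=1/c$); since Corollary~1 is itself just Theorem~1 specialized via Theorem~2, the two arguments are the same. One small remark: Theorem~4 is stated only in the direction ``constant curvatures $\Rightarrow$ cylinder,'' so your phrase ``exactly'' overstates it slightly; however, the paper's own proof makes the same tacit identification (the proposition is meant to be read within the classification of Theorems~3--5, where ``cylinder'' refers precisely to the constant-curvature case), so this is not a gap.
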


\begin{proof} Let $X$ a cylinder, from Theorem 3, the principal curvatures are given by
$\lambda_1=c\neq0$, $\lambda_2=0$ and using the Theorem 2, the first fundamental form of $X$ is given by
$I=\frac{1}{c^2}\bigg(\epsilon_1du_1^2+\epsilon_2du_2^2\bigg)$, $\epsilon_i^2=1$, $1\leq i\leq2$.\\
So, from Theorem 1, the solutions of the pseudo-Calapso equation are given by
$\omega=\epsilon_1\sqrt{2}He^\varphi$ and $\Omega=\epsilon_1\sqrt{2}H'e^\varphi$, $\epsilon_1^2=1$.\\
Since, $H=-H'=\frac{c}{2}$ and $e^\varphi=\frac{1}{c}$, we get that the solutions are given by $\Omega=-\omega=-\frac{\epsilon_1\sqrt{2}}{2}$.
The proof is complete.
\end{proof}

\begin{example} {\em Consider $b_1=0$, then from Theorem 3, we get
\begin{eqnarray}\label{exemplo1}
\lambda_1=a_{21}f(u_2)+a_{22}g(u_2)-\epsilon_1c_1,\hspace{1cm}\lambda_2=\frac{c_1}{2}u^2_1+a_{11}u_1+a_{12},
\end{eqnarray}
where
\begin{eqnarray}\label{f}
f(u_2)=\left\{\begin{array}{ll}
&\sinh(u_2)\hspace{0,1cm} if \hspace{0,1cm}\epsilon_2=-1\\
&\sin(u_2),\hspace{0,1cm} if \hspace{0,1cm}\epsilon_2=1,
\end{array}
\right.
\end{eqnarray}
\begin{eqnarray}\label{g}
g(u_2)=\left\{\begin{array}{ll}
&\cosh(u_2)\hspace{0,1cm} if \hspace{0,1cm}\epsilon_2=-1\\
&\cos(u_2),\hspace{0,1cm} if \hspace{0,1cm}\epsilon_2=1,
\end{array}
\right.
\end{eqnarray}
where the constants $c_1$, $a_{11}$, $a_{12}$, $a_{21}$ and
$a_{22}$, satisfy
\begin{eqnarray*}\nonumber
a^2_{22}+\epsilon_2a^2_{21}+\epsilon_1a^2_{11}-2\epsilon_1c_1a_{12}-c_1^2=0,\hspace{1cm}a_{22}-a_{12}-\epsilon_1c_1\neq0.
\end{eqnarray*}
Using Theorem 5 and (\ref{xen}), we get a $\epsilon$-isothermic Dupin surface
$X=\frac{G_2-G_1}{\lambda_2-\lambda_1}$ with
\begin{eqnarray*}
\nonumber
N&=&\frac{\lambda_2G_2-\lambda_1G_1}{\lambda_2-\lambda_1},\\
\nonumber
v_1&=&\frac{1}{a_{22}-a_{12}-\epsilon_1c_1}\bigg(a_{11} ,
\epsilon_2\epsilon_1a_{21} , \epsilon_1a_{22}-c_1\bigg),\\
\nonumber
G_1&=&\frac{u_1^2}{2}v_1+u_1e_1+e_3,\nonumber\\
G_2&=&\epsilon_1\big(g(u_2)-1\big)v_1+f(u_2)e_2+g(u_2)e_3,\nonumber
\end{eqnarray*}
where $e_1=(1,0,0)$, $e_2=(0,1,0)$ and $e_3=(0,0,1)$.}
\end{example}

\begin{proposition} {Let $X$ Dupin surface as in Example 1. Then the functions
\begin{eqnarray}\label{calapso1}
&&\omega=\frac{\epsilon_1\sqrt{2}}{2}\frac{c_1u^2_1+2a_{11}u_1+2a_{12}+2a_{21}f(u_2)+2a_{22}g(u_2)+2\epsilon_1c_1}{c_1u^2_1+2a_{11}u_1+2a_{12}-2a_{21}f(u_2)+
2a_{22}g(u_2)-2\epsilon_1c_1},\\
&&\Omega=\frac{\epsilon_1\sqrt{2}}{2},\hspace{0,5cm}\epsilon_1^2=1\nonumber
\end{eqnarray}
are solutions for the pseudo-Calapso equation, where $f$ and $g$ are given
by (\ref{f}) and (\ref{g}) and the constants $c_1$, $a_{11}$,
$a_{12}$, $a_{21}$ and $a_{22}$, satisfy
\begin{eqnarray*}\nonumber
a^2_{22}+\epsilon_2a^2_{21}+\epsilon_1a^2_{11}-2\epsilon_1c_1a_{12}-c_1^2=0,\hspace{1cm}a_{22}-a_{12}-\epsilon_1c_1\neq0
\end{eqnarray*}}
\end{proposition}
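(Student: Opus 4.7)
The strategy is to invoke Corollary 1 directly. That corollary asserts that for any $\epsilon$-isothermic Dupin surface with two distinct principal curvatures $-\lambda_1,-\lambda_2$, the functions
$$
\omega = \frac{\epsilon_1\sqrt{2}(\lambda_2+\lambda_1)}{2(\lambda_2-\lambda_1)}, \qquad \Omega = \frac{\epsilon_1\sqrt{2}}{2}
$$
solve the pseudo-Calapso equation. Since Example 1 exhibits precisely such a Dupin surface (the $b_1=0$ subcase of Theorem 3), the proof reduces to substituting the explicit $\lambda_1,\lambda_2$ given in (\ref{exemplo1}) and simplifying.

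First, I would verify that the Dupin surface of Example 1 actually satisfies the hypotheses of Corollary 1. Theorem 3 in the $b_1=0$ case requires the compatibility condition (\ref{eqd}), and this is precisely the algebraic constraint $a_{22}^2+\epsilon_2 a_{21}^2+\epsilon_1 a_{11}^2-2\epsilon_1 c_1 a_{12}-c_1^2=0$ imposed in Example 1. The distinctness of the two principal curvatures (needed both to invoke Corollary 1 and to keep the denominator of $\omega$ away from zero) is guaranteed by the additional assumption $a_{22}-a_{12}-\epsilon_1 c_1\neq 0$, which forces $\lambda_2-\lambda_1\neq 0$ at the origin and hence on a neighborhood. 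Theorem 5 then supplies the surface itself, so Corollary 1 applies.

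Second, I would carry out the substitution. From (\ref{exemplo1}),
$$\lambda_2+\lambda_1 = \tfrac{c_1}{2}u_1^2 + a_{11}u_1 + a_{12} + a_{21}f(u_2) + a_{22}g(u_2) - \epsilon_1 c_1,$$
$$\lambda_2-\lambda_1 = \tfrac{c_1}{2}u_1^2 + a_{11}u_1 + a_{12} - a_{21}f(u_2) - a_{22}g(u_2) + \epsilon_1 c_1.$$
Doubling numerator and denominator of the expression for $\omega$ from Corollary 1 yields the explicit rational function stated in (\ref{calapso1}), while the formula for $\Omega$ is already in the closed form given by Corollary 1. The algebraic identity (\ref{eqd}) on the constants is only used to certify applicability of Corollary 1; no further cancellation in the numerator or denominator is needed to reach (\ref{calapso1}).

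There is essentially no obstacle: the whole argument is a direct substitution into a result already established. The only care required is in bookkeeping the constraints on the constants, i.e.\ matching the hypothesis of Example 1 to condition (\ref{eqd}) of Theorem 3 and to the nonvanishing of $\lambda_2-\lambda_1$; once this is done, the proposition follows at once from Corollary 1.
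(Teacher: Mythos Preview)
Your proposal is correct and follows essentially the same route as the paper: the paper's proof invokes Theorem~2 to obtain $e^{\varphi}=1/(\lambda_2-\lambda_1)$ and then Theorem~1 to get $\omega=\epsilon_1\sqrt{2}He^{\varphi}$, $\Omega=\epsilon_1\sqrt{2}H'e^{\varphi}$, which is exactly the content of Corollary~1 that you cite, after which both proofs reduce to the same substitution of~(\ref{exemplo1}).
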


\begin{proof} Let $X$ given by Example 1, where the principal curvature $-\lambda_1$ and $-\lambda_2$, are give by
(\ref{exemplo1}) and using the Theorem 2, the first fundamental form of $X$ is given by $I=\frac{1}{(\lambda_2-\lambda_1)^2}\bigg(\epsilon_1du_1^2+\epsilon_2du_2^2\bigg)$,
$\epsilon_i^2=1$, $1\leq i\leq2$.\\
So, from Theorem 1, the solutions of the pseudo-Calapso equation are given by
$\omega=\epsilon_1\sqrt{2}He^\varphi$ and $\Omega=\epsilon_1\sqrt{2}H'e^\varphi$. Since,
$H=\frac{\lambda_2+\lambda_1}{2}$, $H'=\frac{\lambda_2-\lambda_1}{2}$ and
$e^\varphi=\frac{1}{\lambda_2-\lambda_1}$, we get that the solutions are given by (\ref{calapso1}). The proof is complete.

\end{proof}

\begin{example}{\em Consider $-1<b_1<0$, then from Theorem 3, we get
\begin{eqnarray}\label{exemplo3}
&&\lambda_1=a_{21}f(u_2)+a_{22}g(u_2)-\frac{\epsilon_1c_1}{b_1+1},\\
&&\lambda_2=a_{11}\widetilde{f}(u_1)+a_{12}\widetilde{g}(u_1)-\frac{\epsilon_1c_1}{b_1}.\nonumber
\end{eqnarray}
where
\begin{eqnarray}\label{f1}
f(u_2)=\left\{\begin{array}{ll}
&\sinh(\sqrt{1+b_1}u_2)\hspace{0,1cm} if \hspace{0,1cm}\epsilon_2=-1\\
&\sin(\sqrt{1+b_1}u_2),\hspace{0,1cm} if
\hspace{0,1cm}\epsilon_2=1,
\end{array}
\right.
\end{eqnarray}
\begin{eqnarray}\label{g1}
g(u_2)=\left\{\begin{array}{ll}
&\cosh(\sqrt{1+b_1}u_2)\hspace{0,1cm} if \hspace{0,1cm}\epsilon_2=-1\\
&\cos(\sqrt{1+b_1}u_2),\hspace{0,1cm} if
\hspace{0,1cm}\epsilon_2=1,
\end{array}
\right.
\end{eqnarray}
\begin{eqnarray}\label{f1tiu}
\widetilde{f}(u_1)=\left\{\begin{array}{ll}
&\sinh(\sqrt{-b_1}u_1)\hspace{0,1cm} if \hspace{0,1cm}\epsilon_1=-1\\
&\sin(\sqrt{-b_1}u_2),\hspace{0,1cm} if
\hspace{0,1cm}\epsilon_1=1,
\end{array}
\right.
\end{eqnarray}
\begin{eqnarray}\label{g1tiu}
\widetilde{g}(u_1)=\left\{\begin{array}{ll}
&\cosh(\sqrt{-b_1}u_2)\hspace{0,1cm} if \hspace{0,1cm}\epsilon_1=-1\\
&\cos(\sqrt{-b_1}u_2),\hspace{0,1cm} if
\hspace{0,1cm}\epsilon_1=1,
\end{array}
\right.
\end{eqnarray}
where $\epsilon_i^2=1$, $\epsilon=\epsilon_1\epsilon_2$ and the
constants $c_1$, $a_{11}$, $a_{12}$, $a_{21}$ and $a_{22}$,
satisfy
\begin{eqnarray}
&&-b_1\big(\epsilon_1a_{11}^2+a_{12}^2\big)+\big(\epsilon_2a_{21}^2+a_{22}^2\big)\big(1+b_1\big)+\frac{c_1^2}{b_1( b_1+1)}=0,\label{ex3}\\
&&m_1=\frac{\big(a_{12}-a_{22}\big)\big(b^2_1+b_1\big)-\epsilon_1c_1}{b_1^2+b_1}\neq0.\nonumber
\end{eqnarray}
Using Theorem 5 and (\ref{xen}), we get a $\epsilon$-isothermic Dupin surface
$X=\frac{G_2-G_1}{\lambda_2-\lambda_1}$ with
\begin{eqnarray*}
N&=&\frac{\lambda_2G_2-\lambda_1G_1}{\lambda_2-\lambda_1},\\
v_1&=&\frac{1}{m_1}\bigg(-a_{11}\sqrt{-b_1}, \epsilon a_{21}\sqrt{b_1+1} ,
\epsilon_1\bigg(-b_1m_1+\frac{a_{22}(1+b_1)-\epsilon_1c_1}{1+b_1}\bigg)\bigg),\\
G_1&=&\widetilde{g}(u_1)\bigg(\frac{\epsilon_1v_1}{b_1}+e_3\bigg)+\frac{\widetilde{f}(u_1)}{\sqrt{-b_1}}e_1-\frac{\epsilon_1v_1}{b_1},\\
G_2&=&g(u_2)\bigg(\frac{\epsilon_1v_1}{b_1+1}+e_3\bigg)+\frac{f(u_2)}{\sqrt{b_1+1}}e_2-\frac{\epsilon_1v_1}{b_1+1}.
\end{eqnarray*}}
\end{example}

\begin{proposition} {Let $X$ Dupin surface as in Example 2. Then the functions
\begin{eqnarray}
\omega=&&\frac{\epsilon_1\sqrt{2}}{2}\frac{(b_1^2+b_1)(a_{11}\widetilde{f}(u_1)+a_{12}\widetilde{g}(u_1)+a_{21}f(u_2)+a_{22}g(u_2))-
\epsilon_1c_1(2b_1+1)}{(b_1^2+b_1)(a_{11}\widetilde{f}(u_1)+a_{12}\widetilde{g}(u_1)-a_{21}f(u_2)-a_{22}g(u_2))-\epsilon_1c_1},\nonumber\\
\,\,\,\,\,\,\,\,\,\,&&\Omega=\frac{\epsilon_1\sqrt{2}}{2},\label{calapso3}
\end{eqnarray}
are solutions for the pseudo-Calapso equation,  where $f$, $g$, $\widetilde{f}$ and $\widetilde{g}$ are given by (\ref{f1}),
(\ref{g1}), (\ref{f1tiu}) and (\ref{g1tiu}) and the constants $c_1$, $a_{11}$, $a_{12}$, $a_{21}$ and $a_{22}$, satisfy
(\ref{ex3}).}
\end{proposition}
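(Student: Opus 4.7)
The plan is to reduce this directly to the template already used for Proposition 2. Since $X$ is an $\epsilon$-isothermic Dupin surface with two distinct principal curvatures, Theorem 2 gives the first fundamental form
\begin{eqnarray*}
I=\frac{1}{(\lambda_2-\lambda_1)^2}\big(\epsilon_1du_1^2+\epsilon_2du_2^2\big),
\end{eqnarray*}
so $e^\varphi=\frac{1}{\lambda_2-\lambda_1}$. By Theorem 1, the two functions
\begin{eqnarray*}
\omega=\epsilon_1\sqrt{2}\,e^\varphi H, \qquad \Omega=\epsilon_1\sqrt{2}\,e^\varphi H'
\end{eqnarray*}
are solutions of the pseudo-Calapso equation. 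Using $H=\frac{\lambda_1+\lambda_2}{2}$ and the skew curvature $H'=\frac{\lambda_2-\lambda_1}{2}$ from (\ref{skeaw}), one immediately gets $\Omega=\frac{\epsilon_1\sqrt{2}}{2}$ and $\omega=\frac{\epsilon_1\sqrt{2}(\lambda_1+\lambda_2)}{2(\lambda_2-\lambda_1)}$.

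It remains to substitute the explicit $\lambda_1$, $\lambda_2$ from Example 2 and rearrange. From (\ref{exemplo3}),
\begin{eqnarray*}
\lambda_1+\lambda_2 &=& a_{11}\widetilde{f}(u_1)+a_{12}\widetilde{g}(u_1)+a_{21}f(u_2)+a_{22}g(u_2)-\epsilon_1c_1\Big(\tfrac{1}{b_1}+\tfrac{1}{b_1+1}\Big),\\
\lambda_2-\lambda_1 &=& a_{11}\widetilde{f}(u_1)+a_{12}\widetilde{g}(u_1)-a_{21}f(u_2)-a_{22}g(u_2)-\epsilon_1c_1\Big(\tfrac{1}{b_1}-\tfrac{1}{b_1+1}\Big).
\end{eqnarray*}
Using $\frac{1}{b_1}+\frac{1}{b_1+1}=\frac{2b_1+1}{b_1^2+b_1}$ and $\frac{1}{b_1}-\frac{1}{b_1+1}=\frac{1}{b_1^2+b_1}$, I would then multiply numerator and denominator of $\frac{\lambda_1+\lambda_2}{\lambda_2-\lambda_1}$ by $b_1^2+b_1$ (which is nonzero by the hypothesis $-1<b_1<0$) to match exactly the expression (\ref{calapso3}).

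There is no genuine obstacle here: the first half is a verbatim application of Theorem 1 and Theorem 2, and the second half is bookkeeping with the constants $b_1$ and $c_1$. The only point that warrants a line of care is the clearing of the denominator $b_1(b_1+1)$, where one must invoke the hypothesis $-1<b_1<0$ of Example 2 to ensure that $b_1^2+b_1\neq0$; the condition (\ref{ex3}) plays no direct role in the computation, but guarantees (through Theorem 3 and Theorem 5) that $X$ indeed exists, so that the formulas make geometric sense.
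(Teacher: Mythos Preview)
Your argument is correct and follows essentially the same route as the paper's own proof: invoke Theorem~2 to identify $e^\varphi=\frac{1}{\lambda_2-\lambda_1}$, apply Theorem~1 to obtain $\omega=\epsilon_1\sqrt{2}e^\varphi H$ and $\Omega=\epsilon_1\sqrt{2}e^\varphi H'$, and then substitute the explicit $\lambda_1,\lambda_2$ from Example~2. You actually supply more detail than the paper, which omits the final algebraic step of clearing the denominator $b_1(b_1+1)$ and simply asserts that (\ref{calapso3}) follows; your observation that the hypothesis $-1<b_1<0$ is what justifies this step, and that condition (\ref{ex3}) enters only through the existence of $X$, is a nice clarification.
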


\begin{proof} Let $X$ given by Example 2, where the principal curvature $-\lambda_1$ and $-\lambda_2$, are given by
(\ref{exemplo3}) and using the Theorem 2, the first fundamental form of  $X$ is given by
$I=\frac{1}{(\lambda_2-\lambda_1)^2}\bigg(\epsilon_1du_1^2+\epsilon_2du_2^2\bigg)$,
$\epsilon_i^2=1$, $1\leq i\leq2$.\\
So, from Theorem 1, the solutions of the pseudo-Calapso equation are given by
$\omega=\epsilon_1\sqrt{2}He^\varphi$ and $\Omega=\epsilon_1\sqrt{2}H'e^\varphi$.\\
Since, $H=\frac{\lambda_2+\lambda_1}{2}$, $H'=\frac{\lambda_2-\lambda_1}{2}$ and
$e^\varphi=\frac{1}{\lambda_2-\lambda_1}$, it follows that (\ref{calapso3}) are solutions for the pseudo-Calapso equation.

\end{proof}

\begin{example}{\em Consider $b_1>0$, then from Theorem 3, we get
\begin{eqnarray}\label{exemplo4}
&&\lambda_1=a_{21}f(u_2)+a_{22}g(u_2)-\frac{\epsilon_1c_1}{b_1+1},\\
&&\lambda_2=a_{11}\widehat{f}(u_1)+a_{12}\widehat{g}(u_1)-\frac{\epsilon_1c_1}{b_1}.\nonumber
\end{eqnarray}
where $f$ and $g$ are given by (\ref{f1}), (\ref{g1}) and
\begin{eqnarray}\label{f1chapel}
\widehat{f}(u_1)=\left\{\begin{array}{ll}
&\sinh(\sqrt{b_1}u_1)\hspace{0,1cm} if \hspace{0,1cm}\epsilon_1=-1\\
&\sin(\sqrt{b_1}u_2),\hspace{0,1cm} if \hspace{0,1cm}\epsilon_1=1,
\end{array}
\right.
\end{eqnarray}
\begin{eqnarray}\label{g1chapel}
\widehat{g}(u_1)=\left\{\begin{array}{ll}
&\cosh(\sqrt{b_1}u_2)\hspace{0,1cm} if \hspace{0,1cm}\epsilon_1=-1\\
&\cos(\sqrt{b_1}u_2),\hspace{0,1cm} if \hspace{0,1cm}\epsilon_1=1,
\end{array}
\right.
\end{eqnarray}
where $\epsilon_i^2=1$, $\epsilon=\epsilon_1\epsilon_2$ and the
constants $c_1$, $a_{11}$, $a_{12}$, $a_{21}$ and $a_{22}$,
satisfy
\begin{eqnarray}
&&b_1\big(\epsilon_1a_{11}^2-a_{12}^2\big)+\big(\epsilon_2a_{21}^2+a_{22}^2\big)\big(1+b_1\big)+\frac{c_1^2}{b_1( b_1+1)}=0,\label{ex4}\\
&&m_1=\frac{\big(a_{22}-a_{12}\big)\big(b^2_1+b_1\big)-\epsilon_1c_1}{b_1^2+b_1}\neq0.\nonumber
\end{eqnarray}
Using Theorem 5 and (\ref{xen}), we get $\epsilon$-isothermic Dupin surface
$X=\frac{G_2-G_1}{\lambda_2-\lambda_1}$ with
\begin{eqnarray*}
N&=&\frac{\lambda_2G_2-\lambda_1G_1}{\lambda_2-\lambda_1}\\
\nonumber
v_1&=&\frac{1}{m_1}\bigg(-a_{11}\sqrt{b_1} , -\epsilon a_{21}\sqrt{b_1+1} , \epsilon_1\bigg(-b_1m_1+\frac{a_{22}(-1-b_1)+\epsilon_1c_1}{1+b_1}\bigg)\bigg),\\
G_1&=&\widehat{g}(u_1)\bigg(\frac{\epsilon_1v_1}{b_1}+e_3\bigg)+\frac{\widehat{f}(u_1)}{\sqrt{b_1}}e_1-\frac{\epsilon_1v_1}{b_1},\nonumber\\
G_2&=&g(u_2)\bigg(\frac{\epsilon_1v_1}{b_1+1}+e_3\bigg)+\frac{f(u_2)}{\sqrt{b_1+1}}e_2-\frac{\epsilon_1v_1}{b_1+1}.\nonumber
\end{eqnarray*}}
\end{example}

\begin{proposition}{Let $X$ Dupin surface as in Example 3. Then the functions
\begin{eqnarray}
\omega=&&\frac{\epsilon_1\sqrt{2}}{2}\frac{(b_1^2+b_1)(a_{11}\widehat{f}(u_1)+a_{12}\widehat{g}(u_1)+a_{21}f(u_2)+a_{22}g(u_2))-
\epsilon_1c_1(2b_1+1)}{(b_1^2+b_1)(a_{11}\widehat{f}(u_1)+a_{12}\widehat{g}(u_1)-a_{21}f(u_2)-a_{22}g(u_2))-\epsilon_1c_1},\nonumber\\
&&\Omega=\frac{\epsilon_1\sqrt{2}}{2},\label{calapso4}
\end{eqnarray}
are solutions for the pseudo-Calapso equation, where $f$, $g$, $\widehat{f}$ and $\widehat{g}$ are given by
(\ref{f1}), (\ref{g1}), (\ref{f1chapel}) and (\ref{g1chapel}) and
the constants $c_1$, $a_{11}$, $a_{12}$, $a_{21}$ and $a_{22}$,
satisfy (\ref{ex4}).}
\end{proposition}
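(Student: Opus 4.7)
The plan is to mimic the proofs of Propositions 2 and 3 line by line, since all the analytic content has already been packaged by Theorem 1; what remains is purely bookkeeping on the explicit form of $\lambda_1$ and $\lambda_2$ from Example 3.

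First I would invoke Theorem 2, which guarantees that the first fundamental form of any $\epsilon$-isothermic Dupin surface with distinct principal curvatures $-\lambda_1,-\lambda_2$ has the form $I = \frac{1}{(\lambda_2-\lambda_1)^2}(\epsilon_1 du_1^2+\epsilon_2 du_2^2)$. Comparing with $I = e^{2\varphi}(\epsilon_1 du_1^2+\epsilon_2 du_2^2)$ fixes the conformal factor as $e^\varphi = 1/(\lambda_2-\lambda_1)$. Because the principal curvatures are $-\lambda_1$ and $-\lambda_2$, the mean curvature is $H = (\lambda_1+\lambda_2)/2$ and the skew curvature (Definition 5) is $H' = (\lambda_2-\lambda_1)/2$.

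Next I would apply Theorem 1, which asserts that $\omega = \epsilon_1\sqrt{2}\,e^\varphi H$ and $\Omega = \epsilon_1\sqrt{2}\,e^\varphi H'$ are both solutions of the pseudo-Calapso equation. Substituting the expressions just obtained yields
$$
\omega \;=\; \frac{\epsilon_1\sqrt{2}}{2}\cdot\frac{\lambda_1+\lambda_2}{\lambda_2-\lambda_1},\qquad
\Omega \;=\; \frac{\epsilon_1\sqrt{2}}{2},
$$
so the formula for $\Omega$ is already in the form claimed, with no further work.

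It only remains to rewrite $\omega$ in the stated rational form. I would substitute the explicit expressions (\ref{exemplo4}) for $\lambda_1$ and $\lambda_2$ and combine the constant pieces using
$$
-\frac{\epsilon_1 c_1}{b_1}-\frac{\epsilon_1 c_1}{b_1+1} \;=\; -\frac{\epsilon_1 c_1(2b_1+1)}{b_1(b_1+1)},\qquad
-\frac{\epsilon_1 c_1}{b_1}+\frac{\epsilon_1 c_1}{b_1+1} \;=\; -\frac{\epsilon_1 c_1}{b_1(b_1+1)},
$$
and then clear denominators by multiplying numerator and denominator of the ratio by $b_1(b_1+1) = b_1^2+b_1$ (nonzero, since $b_1>0$ in Example 3). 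This produces exactly (\ref{calapso4}). There is no serious obstacle: the only minor point to note is that the non-degeneracy condition $m_1\neq 0$ built into (\ref{ex4}) guarantees $\lambda_2-\lambda_1\not\equiv 0$, so the resulting rational expression for $\omega$ is well defined on $U$.
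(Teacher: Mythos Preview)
Your proposal is correct and follows exactly the paper's own argument: invoke Theorem~2 to identify $e^{\varphi}=1/(\lambda_2-\lambda_1)$, apply Theorem~1 with $H=(\lambda_1+\lambda_2)/2$ and $H'=(\lambda_2-\lambda_1)/2$, and then substitute the explicit $\lambda_i$ from Example~3. You have simply made explicit the algebraic step (combining the $-\epsilon_1 c_1/b_1$ and $-\epsilon_1 c_1/(b_1+1)$ terms and clearing $b_1(b_1+1)$) that the paper leaves implicit.
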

\begin{proof} Let $X$ given by Example 3, where the principal curvatures $-\lambda_1$ and $-\lambda_2$, are give by
(\ref{exemplo4}) and using the Theorem 2, the first fundamental form is given by
$I=\frac{1}{(\lambda_2-\lambda_1)^2}\bigg(\epsilon_1du_1^2+\epsilon_2du_2^2\bigg)$,
$\epsilon_i^2=1$, $1\leq i\leq2$.\\
So, from Theorem 1, the solutions of the pseudo-Calapso equation are
$\omega=\epsilon_1\sqrt{2}He^\varphi$ and $\Omega=\epsilon_1\sqrt{2}H'e^\varphi$. Since, $H=\frac{\lambda_2+\lambda_1}{2}$,
$H=\frac{\lambda_2-\lambda_1}{2}$ and
$e^\varphi=\frac{1}{\lambda_2-\lambda_1}$, we obtain that the solutions
are given by (\ref{calapso4}). The proof is complete.

\end{proof}

\begin{proposition}{If $f$ is a $\epsilon_2-$holomorphic function,  $\epsilon_2=\pm1$, then the function $\omega$
given by
\begin{eqnarray*}\nonumber
\omega=\frac{2\sqrt{2|\langle f',f'\rangle |}}{1+|f|^2}
\end{eqnarray*}
is a solution to the pseudo-Calapso equation.}
\end{proposition}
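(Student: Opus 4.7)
My approach is to realize the given $\omega$ as the canonical solution associated, via Theorem 1, to an $\epsilon$-isothermic parametrization of the unit pseudo-sphere in $E^3$. The first step is to record the $\epsilon_2$-Cauchy-Riemann identities: from $u_{,1}=v_{,2}$ and $u_{,2}=-\epsilon_2 v_{,1}$ one deduces $\Delta_\epsilon u=\Delta_\epsilon v=0$ (recall $\epsilon=\epsilon_2$ since $\epsilon_1=1$) together with
\begin{equation*}
\langle f',f'\rangle = u_{,1}^2+\epsilon_2 v_{,1}^2 = v_{,2}^2+\epsilon_2 u_{,2}^2.
\end{equation*}
These are the only analytic tools needed.

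Next, choose the ambient signature with $\epsilon_3=1$ (the pseudo-Calapso equation involves only $\epsilon_1,\epsilon_2$, so a single choice of $\epsilon_3$ suffices to exhibit the solution) and consider the pseudo-stereographic parametrization
\begin{equation*}
X(u_1,u_2)=\frac{1}{1+|f|^2}\bigl(2u,\,2v,\,1-|f|^2\bigr).
\end{equation*}
A direct computation yields $\langle X,X\rangle = \dfrac{4|f|^2+(1-|f|^2)^2}{(1+|f|^2)^2}=1$, so $X$ parametrizes the unit sphere, and the Cauchy-Riemann identities force the cross term $\langle X_{,1},X_{,2}\rangle$ to vanish and the diagonal entries to combine into
\begin{equation*}
\langle X_{,i},X_{,i}\rangle=\epsilon_i\,\frac{4\,|\langle f',f'\rangle|}{(1+|f|^2)^2}.
\end{equation*}
Hence the induced conformal factor is $e^\varphi=\dfrac{2\sqrt{|\langle f',f'\rangle|}}{1+|f|^2}$.

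Finally, the unit sphere is totally umbilic with constant mean curvature $H=1$; any orthogonal parametrization is trivially by lines of curvature, so $X$ is $\epsilon$-isothermic in the sense of the paper. Theorem 1 then delivers
\begin{equation*}
\omega=\epsilon_1\sqrt{2}\,e^\varphi H=\frac{2\sqrt{2\,|\langle f',f'\rangle|}}{1+|f|^2}
\end{equation*}
as a solution of (\ref{calapsoequation}); the overall sign $\epsilon_1$ is immaterial since the equation is homogeneous quadratic in $\omega$. The main obstacle lies in the middle step: verifying that $\langle X_{,1},X_{,2}\rangle=0$ and extracting the stated diagonal form of the metric both come down to algebraic cancellations driven by $\epsilon_2$-holomorphicity, which require careful bookkeeping across the three signature cases. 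As a safety net, (\ref{calapsoequation}) can always be verified by brute force on the explicit formula for $\omega$: differentiate $\ln\omega$, compute $\omega_{,12}/\omega$ and $(\omega^2)_{,12}$, and collapse the resulting terms using the identities of the first step — purely mechanical, if lengthy.
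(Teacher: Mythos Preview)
Your proposal is correct and follows essentially the same route as the paper: both construct the pseudo-stereographic map of the unit pseudo-sphere, check that the induced metric has conformal factor $e^{2\varphi}=\dfrac{4|\langle f',f'\rangle|}{(1+|f|^2)^2}$, and then invoke Theorem~1. The paper is terser (it keeps $\epsilon_3$ as a parameter and omits the remark that the sphere is totally umbilic with $H=1$), but the argument is the same; your explicit justification for fixing $\epsilon_3=1$ and your observation that the overall sign $\epsilon_1$ is irrelevant are welcome clarifications.
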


\begin{proof} Let $\epsilon_2=\pm1$ and $f$ is a $\epsilon_2-$holomorphic function and define the application
\begin{eqnarray*}\nonumber
X(z)=\bigg(\frac{2f}{1+\epsilon_3\langle f,f\rangle},\frac{\epsilon_3\langle f,f\rangle -1}{1+\epsilon_3\langle f,f\rangle}\bigg),\hspace{0,5cm}z\in
C_{\epsilon}
\end{eqnarray*}
where $\epsilon_3^2=1$.\\
This application is a parametrization of the sphere in $\mathbb{R}^3$ with
metric $du_1^2+\epsilon_2 du_2^2+\epsilon_3du_3^2$. In fact, it is easy to see that $\langle X,X\rangle=\epsilon_3$. Moreover, the first fundamental
form of $X$ is given by
\begin{eqnarray*}\nonumber
I=\frac{4|\langle f',f'\rangle|}{\big(1+\epsilon_3|f|^2\big)^2}\big[du_1^2+\epsilon_2 du_2^2\big].
\end{eqnarray*}
Using Theorem 1, we get the result.

\end{proof}

\end{document}